
\documentclass[11pt]{article}       
\usepackage{geometry}               
\geometry{letterpaper}          
\geometry{margin=1in}


\usepackage{graphicx}
\usepackage{caption}
\captionsetup[figure]{font=small,labelfont=small}
\usepackage{amsmath} 
\usepackage{amssymb}
\usepackage{amsthm}
\usepackage{bm}
\usepackage{lipsum}
\usepackage[linesnumbered, ruled]{algorithm2e}
\usepackage{enumerate}
\usepackage{color}
\usepackage{array}
\usepackage{cite}
\usepackage{setspace}
\usepackage{url}
\usepackage{cite}

\newtheorem{proposition}{Proposition}

\newtheorem{corollary}{Corollary}
\newtheorem{lemma}{Lemma}
\newtheorem{theorem}{Theorem}
\newtheorem{remark}{Remark}
\newtheorem{assumption}{Assumption}

\numberwithin{equation}{section}

\DeclareMathOperator*{\argmin}{arg\,min}

\title{A Convex Optimization Approach to Dynamic Programming
 in Continuous State and Action Spaces} 

\author{
 Insoon Yang\thanks{Department of Electrical and Computer Engineering, Automation and Systems Research Institute,  Seoul National University ({insoonyang@snu.ac.kr}). This work was supported in part by  the Creative-Pioneering Researchers Program through SNU, the National Research Foundation of Korea funded by the MSIT(2020R1C1C1009766), and Samsung Electronics.}
}

\date{}
\providecommand{\keywords}[1]{\textbf{Key words.} #1}

\begin{document}
\maketitle

\pagestyle{myheadings}
\thispagestyle{plain}

\begin{abstract}
In this paper, a convex optimization-based method is proposed for numerically solving dynamic programs in continuous state and action spaces.
The key idea is to approximate the output of the Bellman operator at a particular state by the optimal value of a convex program.
The approximate Bellman operator has a computational advantage because it involves a convex optimization problem in the case of control-affine systems and convex costs. Using this feature, we propose a simple dynamic programming algorithm to evaluate the approximate value function at  pre-specified grid points by solving convex optimization problems in each iteration.
We show that the proposed method approximates the optimal value function with a uniform convergence property in the case of convex optimal value functions.
We also propose an \emph{interpolation-free} design method for a control policy, of which performance converges uniformly to the optimum as the grid resolution becomes finer.
When a nonlinear control-affine system is considered, 
the convex optimization approach provides an approximate  policy with a provable suboptimality bound. 
For general cases,
 the proposed convex formulation of dynamic programming operators can be  modified as a nonconvex bi-level program, in which the inner problem is a linear program, without losing uniform convergence properties.
\end{abstract}

\keywords{Dynamic programming, Convex optimization, Optimal control, Stochastic control}

\section{Introduction}

Dynamic programming (DP) has been one of the most important methods for solving  and analyzing sequential decision-making problems in optimal control, dynamic games, and reinforcement learning; among others.
By using DP,
we can decompose a complicated sequential decision-making problem into multiple tractable subproblems, of which optimal solutions are used to construct an optimal policy of the original problem. 
Numerical methods for DP are the most well
studied for discrete-time Markov decision processes (MDPs) with discrete state and action spaces~(e.g., \cite{Puterman2014}) and continuous-time deterministic and stochastic optimal control problems in continuous state spaces~(e.g., \cite{Kushner2013}). 
This paper focuses on the discrete-time case with continuous state and action spaces in  the  finite-horizon setting.
Unlike infinite-dimensional linear programming (LP) methods~(e.g., \cite{Hernandez2012, Savorgnan2009, Dufour2013}), which require a finite-dimensional approximation of the LP problems,
 we 
  develop a finite-dimensional convex optimization-based method, that uses a discretization of the state space, while not discretizing the action space. 
Moreover, we assume that a system model and a cost function are explicitly known unlike the literature on reinforcement learning (RL)~(e.g., \cite{Sutton2018, Bertsekas2019, Szepesvari2010} and the references therein). 
 Note that our focus is not to resolve the scalability issue in DP.
As opposed to the RL algorithms that seek approximate solutions to possibly high-dimensional problems~(e.g., \cite{Mnih2015, Schulman2015, Lilicrap2015, Haarnoja2018}), our method is useful, when a provable convergence guarantee is needed
for problems with relatively low-dimensional state spaces.\footnote{However, our method is suitable for problems with high-dimensional action spaces.}

Several discretization methods have been developed
for discrete-time DP problems in continuous (Borel) state and action spaces. These methods can be assigned to two categories.  
The first category discretizes both state and action spaces.
Bertsekas~\cite{Bertsekas1975} proposed two discretization methods 
and proved their convergence under a set of assumptions, including Lipschitz type continuity conditions.
Langen~\cite{Langen1981} studied the weak convergence of  an approximation procedure, although no explicit error bound was provided.
However, the discretization method, proposed by Whitt~\cite{Whitt1978} and Hinderer~\cite{Hinderer1978}, is shown to be convergent and to have error bounds. Unfortunately, these error bounds are sensitive to the choice of partitions, and additional compactness and continuity assumptions are needed to reduce the sensitivity. 
Chow and Tsitsiklis~\cite{Chow1991} developed a multi-grid algorithm, which could be more efficient than its single-grid counterpart in achieving a desired level of accuracy.
The discretization procedure, proposed by 
Dufour and Prieto-Rumeau~\cite{Dufour2012},  can handle
 unbounded cost functions and locally compact state spaces.
 However, it still requires the Lipschitz continuity of some components of dynamic programs.
 A measure concentration result for the Wasserstein metric has also been used to measure the accuracy of the approximation method in~\cite{Dufour2015} for Markov control problems  in the average cost setting.
 Unlike the aforementioned approaches, 
the finite-state and finite-action approximation method for MDPs with $\sigma$-compact state spaces, proposed by
 Saldi {\it et al.}, does not rely on any Lipschitz-type continuity conditions~\cite{Saldi2017}.

The second category of discretization methods uses a computational grid only for the state space, i.e., this class of methods does not discretize the action space.
These approaches have a computational advantage over the methods in the first category, particularly when the action space dimension is large. 
The state space discretization procedures, proposed by Hern\'{a}ndez-Lerma~\cite{Hernandez1989}, are shown to have a convergence guarantee with an error bound under Lipschitz continuity conditions on elements of control models. However, they are subject to the issue of local optimality in solving the nonconvex optimization problem over the action space involved in the Bellman equation. 
Johnson {\it et al.}~\cite{Johnson1993} suggested spline interpolation methods, which are computationally efficient in high-dimensional state and action spaces.
Unfortunately, these spline-based approaches do not have a convergence guarantee or an explicit suboptimality bound.
Furthermore, the Bellman equation approximated by these methods involves nonconvex optimization problems.

The method proposed in this paper is classified into the second category of discretization procedures. 
Specifically, our approach only discretizes the state space, and thus it can handle high-dimensional action spaces. 
The key idea is to use an auxiliary optimization variable that assigns the contribution of each grid point when evaluating the value function  at a particular state.
By doing so, we can avoid an explicit interpolation of the optimal value function and control policies evaluated at the pre-specified grid points in the state space, unlike most existing methods.
The contributions of this work are threefold.
First, we propose an approximate version of the Bellman operator and show that the corresponding approximate value function 
 converges uniformly to the optimal value function $v^\star$  when $v^\star$ is convex. 
The proposed approximate Bellman operator has a computational advantage because it
involves a convex optimization problem  in the case of linear systems and convex costs. 
Thus, we can construct a control policy, of which performance converges uniformly to the optimum,
 by solving $M$ convex programs in each iteration of the DP algorithm, where $M$ is the number of grid points required for the  desired accuracy.
Second, we show that the proposed convex optimization approach provides an approximate policy with a provable suboptimality bound in the case of control-affine systems. 
This error bound is useful when gauging the performance of the approximate policy relative to an optimal policy.
Third, we propose a modified version of our approximation method for general cases by localizing the effect of the auxiliary variable. 
The modified Bellman operator involves a nonconvex bi-level optimization problem wherein the inner problem is a linear program. 
We show that both the approximate value function and
the cost-to-go function of a policy obtained by this method converge uniformly to the optimal value function if a globally optimal solution to the nonconvex bilevel problem can be evaluated; related error bounds are also characterized.
From our convex formulation and analysis, we observe that convexity in DP  deserves  attention  as it can play a critical role in designing a tractable numerical method that provides a convergent solution.

The remainder of the paper is organized as follows.
In Section~\ref{sec:setup}, we introduce the setup for dynamic programming problems and our approximation method.
The convex optimization-based method is proposed  in Section~\ref{sec:conv}.
We show the uniform convergence properties of this method when the optimal value function is convex. 
In Section \ref{sec:nonconv},
we characterize an error bound for cases with control-affine systems and
present a modified bi-level version of our approximation method for general cases. 
Section \ref{sec:num} contains numerical experiments that demonstrate
the convergence property  of our method.

\section{The Setup}\label{sec:setup}

\subsection{\bf Notation}

Let $\mathbb{B}_b(\mathcal{X})$ denote the set of bounded measurable functions on $\mathcal{X}$, equipped with the sup norm $\| v \|_\infty := \sup_{\bm{x} \in \mathcal{X}} | v(\bm{x}) | < +\infty$.
Given a measurable function $w: \mathcal{X} \to \mathbb{R}$, 
let $\mathbb{B}_{w}(\mathcal{X})$ denote the set of measurable functions $v$ on $\mathcal{X}$ such that $\| v \|_w := \sup_{\bm{x} \in \mathcal{X}} (| v(\bm{x}) | / w(\bm{x}) ) < +\infty$.
Let $\mathbb{L}(\mathcal{X})$ denote the set of lower semicontinuous functions on $\mathcal{X}$.
Finally, we let $\mathbb{L}_b(\mathcal{X}) := \mathbb{L}(\mathcal{X}) \cap \mathbb{B}_b(\mathcal{X})$ and  $\mathbb{L}_w(\mathcal{X}) := \mathbb{L}(\mathcal{X}) \cap \mathbb{B}_w(\mathcal{X})$.

\subsection{\bf Dynamic Programming Problems}

Consider a discrete-time Markov control system of the form
\[
x_{t+1} = f(x_t, u_t, \xi_t),
\]
where $x_t \in \mathcal{X} \subseteq \mathbb{R}^{n}$ is the system state, and  $u_t \in \mathcal{U}(x_t) \subseteq \mathcal{U}  \subseteq \mathbb{R}^{m}$ is the control input.
The stochastic disturbance process $\{\xi_t \}_{t\geq 0}$, $\xi_t \in \Xi \subseteq \mathbb{R}^{l}$, is i.i.d. and defined on a standard filtered probability space $(\Omega, \mathcal{F}, \{ \mathcal{F}_t\}_{t \geq 0}, \mathbb{P} )$. 
The sets $\mathcal{X}$, $\mathcal{U}(x_t)$, $\mathcal{U}$ and $\Xi$ are assumed to be Borel sets, and
the function $f : \mathcal{X} \times \mathcal{U} \times \Xi \to \mathcal{X}$ is assumed to be measurable.

A \emph{history} up to stage $t$ is defined by $h_t :=(x_0,u_0, \ldots, x_{t-1}, u_{t-1}, x_t)$. 
Let $H_t$ be the set of histories up to stage $t$ and $\pi_t$ be a stochastic kernel from $H_t$ to $\mathcal{U}$. 
The set of admissible control policies is chosen as 
\[
\Pi := \{\pi = (\pi_0,  \ldots, \pi_{K-1} ) :  \pi_t(\mathcal{U}(x_t) | h_t) = 1 \; \forall h_t \in H_t \}.
\]
Our goal is to solve the following finite-horizon stochastic optimal control problem:
\begin{equation}\label{opt}
\inf_{\pi \in \Pi} \; \mathbb{E}^\pi \bigg [
\sum_{t=0}^{K-1} r(x_t, u_t) + q(x_K) \mid x_0 = \bm{x}
\bigg ],
\end{equation}
where $r: \mathcal{X} \times \mathcal{U} \to \mathbb{R}$ is a measurable stage-wise cost function, $q: \mathcal{X} \to \mathbb{R}$ is a measurable terminal cost function,
 and 
$\mathbb{E}^\pi$ represents the expectation taken with respect to the probability measure induced by a policy $\pi$.
Under the following standard assumption for the \emph{measurable selection condition}, there exists a \emph{deterministic Markov} policy, which is optimal~(e.g., \cite[Condition 3.3.3, Theorem 3.3.5]{Hernandez2012}) and \cite[Assumptions 8.5.1--8.5.3, Lemma 8.5.5]{Hernandez2012b}).
In other words, we can find an  optimal policy $\pi^\star$, which is deterministic and Markov, i.e.,
\begin{align*}
\pi^\star \in \Pi^{DM} := \{\pi = (\pi_0, \ldots, \pi_{K-1}) :  \: &\pi_t: \mathcal{X} \to \mathcal{U} \mbox{ is measurable},\\
 & \pi_t (\bm{x}) = \bm{u} \in \mathcal{U}(\bm{x}) \; \forall \bm{x} \in \mathcal{X}\}.
\end{align*}
\begin{assumption}[Semicontinuous model]\label{ass:sc}
Let \[
\mathcal{K}:= \{ (\bm{x}, \bm{u}) : \bm{x} \in \mathcal{X}, \bm{u} \in \mathcal{U}(\bm{x}) \}
\]
  be the set of admissible state-action pairs.
\begin{enumerate}
\item The control set $\mathcal{U}(\bm{x})$ is compact for each $\bm{x} \in \mathcal{X}$, and the multifunction $\bm{x} \mapsto \mathcal{U}(\bm{x})$ is upper semicontinuous;

\item The real-valued functions $r$ and $q$ are lower semicontinuous  on $\mathcal{K}$ and $\mathcal{X}$, respectively.
 
\item 
There exist nonnegative constants $\bar{r}$, $\bar{q}$ and $\beta$, with $\beta\geq 1$, and a continuous weight function $w: \mathcal{X} \to [1, \infty[$ such that  
 $|r(\bm{x}, \bm{u}) | \leq \bar{r} w (\bm{x})$,\\  $|q(\bm{x}) | \leq \bar{q} w (\bm{x})$, and
$\mathbb{E} \big [ w (f(\bm{x}, \bm{u}, \xi)) \big ] \leq \beta w (\bm{x})$ for all $(\bm{x}, \bm{u}) \in \mathcal{K}$;

\item The function $(\bm{x}, \bm{u}) \mapsto  f(\bm{x}, \bm{u}, \bm{\xi})$ is continuous on $\mathcal{K}$ for each $\bm{\xi} \in \Xi$.

\item  The support $\Xi$ is finite, i.e., 
$\Xi := \{\hat{\xi}^{[1]}, \ldots, \hat{\xi}^{[N]}\}$ for some $\hat{\xi}^{[s]}$'s in $\mathbb{R}^l$, $s=1, \ldots, N$.

\end{enumerate}
\end{assumption}

 Note that by \emph{3)}, \emph{4)} and \emph{5)}, $(\bm{x}, \bm{u}) \mapsto \mathbb{E} \big [ w (f(\bm{x}, \bm{u}, \xi)) \big ]$ is continuous on $\mathcal{K}$.
For any $v \in \mathbb{B}_w(\mathcal{X})$, let
\begin{equation}\label{T_opt}
(\mathcal{T}v)(\bm{x}) := \inf_{\bm{u} \in \mathcal{U}(\bm{x})} \;   r(\bm{x}, \bm{u}) + 
\mathbb{E} \big [ v (f(\bm{x}, \bm{u}, \xi)) \big ]  \quad \bm{x} \in \mathcal{X}.
\end{equation}
Under Assumption~\ref{ass:sc}, the dynamic programming (DP) operator $\mathcal{T}$ maps $\mathbb{L}_w(\mathcal{X})$ into itself by \cite[Lemma 8.5.5]{Hernandez2012b}.
Let $v_t^{opt}: \mathcal{X} \to \mathbb{R}$, $t=0, \ldots, K$, be defined by the following Bellman equation:
\[
v_t^{opt} := \mathcal{T} v_{t+1}^{opt}, \quad t = 0, \ldots, K-1
\]
with $v_K^{opt} \equiv q$. 
Under Assumption~\ref{ass:sc},  $v_t^{opt} \in \mathbb{L}_w(\mathcal{X})$, and 
\[
v_t^{opt}(\bm{x}) = \inf_{\pi \in \Pi} \mathbb{E}^\pi  \bigg [
\sum_{k=t}^{K-1} r(x_k, u_k) + q(x_K)| x_t = \bm{x}
\bigg ],
\]
 i.e., $v_t^{opt}$ is the optimal value function of~\eqref{opt}.

Given a measurable function $\varphi: \mathcal{X} \to \mathcal{U}$, let
\[
(\mathcal{T}^{\varphi} v) (\bm{x}) :=  r(\bm{x}, \varphi(\bm{x})) + \mathbb{E} [
v (f(\bm{x}, \varphi(\bm{x}), \xi))
].
\]
Under Assumption~\ref{ass:sc}, there exists a measurable function  $\pi_t^{opt}: \mathcal{X} \to \mathcal{U}$ such that $\pi_t^{opt}(\bm{x}) \in \mathcal{U}(\bm{x})$, and 
\[
v_t^{opt}(\bm{x}) = \mathcal{T}^{\pi_t^{opt}} v_{t+1}^{opt}(\bm{x})
\]
for all $\bm{x} \in \mathcal{X}$ and all $t = 0, \ldots, K-1$.
Then, the deterministic Markov policy $\pi^{opt}:= (\pi_0^{opt}, \ldots, \pi_{K-1}^{opt}) \in \Pi^{DM}$ is an optimal solution to \eqref{opt}
by the dynamic programming principle~\cite[Theorem 3.2.1]{Hernandez2012}.
 In practice, Assumption~\ref{ass:sc}-\emph{5)} can be relaxed
using sampling-based methods to a probability distribution with a continuous support.
Specifically, 
the proposed convex optimization approach can be used in conjunction with the sampling-based (or `empirical') method in~\cite{Rust1997, Munos2008, Haskell2020}.
The convergence of the sampling-based method has been shown under  restrictive conditions such as compact state spaces or finite action spaces. 
The validity of such a combination has been numerically tested in Section~\ref{num:lc}. 
Its extension using stochastic subgradient methods can be found in~\cite{Jang2019}.

\subsection{\bf State Space Discretization}\label{sec:dis}

\begin{figure}[tb]
\begin{center}
\includegraphics[width=2.5in]{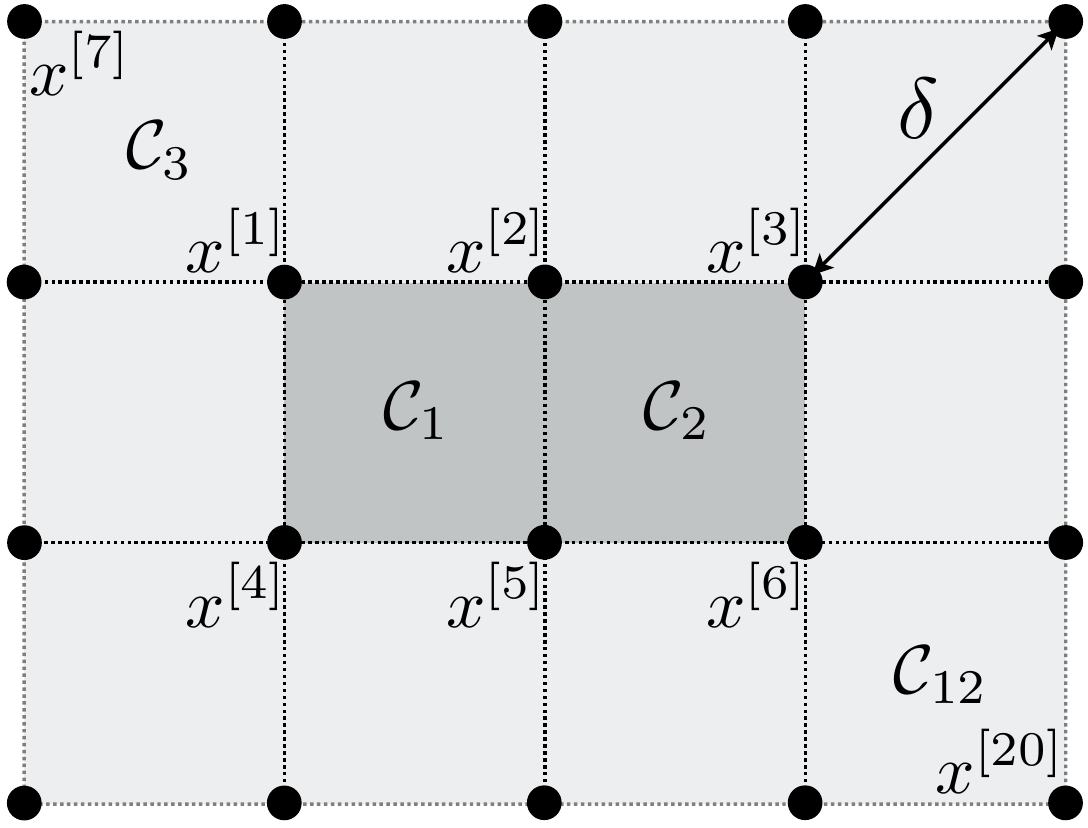}   
\caption{A two-dimensional example of the proposed discretization method. The dark gray box is $\mathcal{Z}_0 = \bigcup_{i=1}^2 \mathcal{C}_i$, and the light gray box is $\mathcal{Z}_1 = \bigcup_{i=1}^{12} \mathcal{C}_i$.  
The nodes $\bm{x}^{[1]}, \ldots, \bm{x}^{[20]}$ are selected as 
the vertices of the small square boxes such that $\bm{x}^{[1]}, \ldots, \bm{x}^{[6]} \in \mathcal{Z}_0$. In this example, $M_0 = 6$, and $M_1 = 20$.} \label{fig:grid}  
\end{center}           
\end{figure}

Our  method aims at approximating the optimal value function $v_t^{opt}$ at pre-specified nodes in a  convex compact set $\mathcal{Z}_t \subseteq \mathcal{X}$ for all~$t$.
We select a sequence $\{\mathcal{Z}_t\}_{t=0}^K$ of convex compact sets such that 
\[
\mathcal{Z}_{0} \subseteq \mathcal{Z}_{1} \subseteq \cdots \subseteq \mathcal{Z}_K \subseteq \mathcal{X},
\]
and
\begin{equation}\label{inclusion}
\big \{ f(\bm{x}, \bm{u}, \bm{\xi}) : \bm{x} \in \mathcal{Z}_{t}, \bm{u} \in \mathcal{U}(\bm{x}), \bm{\xi} \in \Xi \big \} \subseteq \mathcal{Z}_{t+1}.
\end{equation}
The existence of such a sequence is guaranteed under Assumption~\ref{ass:sc} because $f$ is continuous in $(\bm{x}, \bm{u})$, $\mathcal{U}(\bm{x})$ is compact, and $\Xi$ is finite.
Note that $\mathcal{Z}_K \neq \mathcal{X}$ in general, and  the state space $\mathcal{X}$ does not have to be compact.
Several examples satisfying these two conditions can be found in Section~\ref{sec:num}.
This choice of computational domains is also used in \cite{Bertsekas1975}.
With such a sequence, we can evaluate $v_t^{opt}$ on $\mathcal{Z}_t$ by using only information about $v_{t+1}^{opt}$ on $\mathcal{Z}_{t+1}$.

We choose $N_{\mathcal{C}}$ convex compact subsets $\mathcal{C}_1, \ldots, \mathcal{C}_{N_{\mathcal{C}}}$ of $\mathcal{Z}_K$ satisfying the following properties:
\begin{enumerate}
\item
 $\bigcup_{i=1}^{N_{\mathcal{C}}} \mathcal{C}_i = \mathcal{Z}_K$;
 \item $\mathcal{C}_i^o \cap \mathcal{C}_j^o = \emptyset$ for $i \neq j$, where $\mathcal{C}_i^o$ denotes the interior of $\mathcal{C}_i$;
 \item
For each $t$, there exists a subsequence $\{ \mathcal{C}_{i_j}\}_{j=1}^{N_{\mathcal{C}, t}}$ of $\{\mathcal{C}_i\}_{i=1}^{N_{\mathcal{C}}}$ such that\\
$\bigcup_{j=1}^{N_{\mathcal{C}, t}} \mathcal{C}_{i_j} = \mathcal{Z}_t$.
\end{enumerate}
We relabel $\mathcal{C}_i$, if necessary, so that 
$\bigcup_{i=1}^{N_{\mathcal{C}, t}} \mathcal{C}_i = \mathcal{Z}_t$
for all $t$. Note that \\ $N_{\mathcal{C}, K} = N_{\mathcal{C}}$, and $N_{\mathcal{C}, 0}  \leq N_{\mathcal{C}, 1} \leq \cdots \leq N_{\mathcal{C}, K}$.

Let $M_t$ denote the number of nodes (or grid points) in $\mathcal{Z}_t$.
The nodes $\bm{x}^{[1]}, \ldots, \bm{x}^{[M_K]} \in \mathcal{Z}_K$ are chosen such that each $\mathcal{C}_i$ is the convex hull of a \emph{subset} of the nodes. 
A concrete way to construct $\mathcal{Z}_t$'s and $\mathcal{C}_i$'s using a rectilinear grid is described in Appendix~\ref{app:grid}.
For example, in Fig. \ref{fig:grid}, $\mathcal{C}_1$ is the convex hull of $\{\bm{x}^{[1]}, \bm{x}^{[2]}, \bm{x}^{[4]}, \bm{x}^{[5]}\}$.
The maximum of the diameter of the sets $\{\mathcal{C}_i\}_{i=1}^{N_{\mathcal{C}}}$ is given by
\[
\delta := \max_{k=1, \ldots, N_{\mathcal{C}}} \max_{\bm{x}^{[i]}, \bm{x}^{[j]} \in \mathcal{C}_k} \| \bm{x}^{[i]} - \bm{x}^{[j]} \|.
\]
For example, in Fig. \ref{fig:grid}, $\delta$ is equal to the length of $\mathcal{C}_i$'s diagonal. 
We also relabel $\bm{x}^{[i]}$, if necessary, so that 
$\bm{x}^{[1]}, \ldots, \bm{x}^{[M_t]} \in \mathcal{Z}_t$ for all $t$.
Thus, 
\[
M_0 \leq M_1 \leq \cdots \leq M_K.
\]

\subsection{\bf Value Functions on Restricted Spaces}

Let $v_t^\star: \mathcal{Z}_t \to \mathbb{R}$ be
 the optimal value function restricted on $\mathcal{Z}_t$ for each $t$.
 In other words, 
 \[
 v_t^\star (\bm{x}) = v_t^{opt} (\bm{x}) \quad \forall \bm{x} \in \mathcal{Z}_t.
 \]
 Given any $v \in \mathbb{B}_b(\mathcal{Z}_{t+1})$, let
\begin{equation}\label{o_opt}
\begin{split}
(T_{t} v)(\bm{x}) &:= \inf_{\bm{u} \in \mathcal{U}(\bm{x})} \;   r(\bm{x}, \bm{u}) + 
\mathbb{E} \big [ v (f(\bm{x}, \bm{u}, \xi)) \big ]   \\
&=  \inf_{\bm{u} \in \mathcal{U}(\bm{x})} \;   r(\bm{x}, \bm{u}) + 
\sum_{s=1}^N p_s v (f(\bm{x}, \bm{u}, \hat{\xi}^{[s]}))
\end{split}
\end{equation}
for all $\bm{x} \in \mathcal{Z}_t$, where
\[
p_s := \mathrm{Prob}( \xi = \hat{\xi}^{[s]} ) \in [0,1],
\]
and thus $\sum_{s=1}^N p_s = 1$ by definition. 
Under Assumption~\ref{ass:sc}, the restricted optimal value functions satisfy the following Bellman equation:
\[
v_t^\star (\bm{x}) = (T_t v_{t+1}^\star) (\bm{x}) \quad \forall \bm{x} \in \mathcal{Z}_t.
\]
For any $\bm{x} \in \mathcal{Z}_t$, $v_t^\star (\bm{x})$ can be computed by using $v_{t+1}^\star$,
given our choice of $\mathcal{Z}_t$'s in Section~\ref{sec:dis}.
This computation does not require information about $v_{t+1}^{opt}$ outside $\mathcal{Z}_{t+1}$. 
Our goal is to develop a convex optimization-based approach to approximating the restricted optimal value function $v_t^\star$ in a convergent manner.

\section{Convex Value Functions}\label{sec:conv}

\subsection{\bf Approximation of the Bellman Operator}

When computing the optimal value functions via the DP algorithm 
\[
v_{t}^\star := T_t v_{t+1}^\star,
\]
  a closed-form expression of $v_t^\star$ is unavailable in general. Unfortunately, it is challenging to solve the optimization problem in the definition~\eqref{o_opt} of $T_t$ without a closed-form expression of $v_t^\star$.
To resolve this issue, we propose an approximation method that uses the value of $v_t^\star$ only at the nodes $\bm{x}^{[1]}, \ldots, \bm{x}^{[M_{t+1}]}$ in $\mathcal{Z}_{t+1}$.

Given any $v \in \mathbb{B}_b (\mathcal{Z}_{t+1})$, 
let 
\begin{equation}\label{con_opt}
\begin{split}
(\hat{T}_t v ) (\bm{x}) := \inf_{\bm{u}, \gamma}  \quad & r (\bm{x}, \bm{u}) +  \sum_{s=1}^N \sum_{i=1}^{M_{t+1}} p_s \gamma_{s,i} v (\bm{x}^{[i]}) \\
\mbox{s.t.} \quad & f  (\bm{x}, \bm{u}, \hat{\xi}^{[s]}) = \sum_{i=1}^{M_{t+1}} \gamma_{s,i} \bm{x}^{[i]} \quad \forall s \in \mathcal{S}\\ 
&\bm{u} \in \mathcal{U}(\bm{x}), \; \gamma_{s}  \in \Delta  \quad   \forall s \in \mathcal{S}
\end{split}
\end{equation}
for each $\bm{x} \in \mathcal{Z}_{t}$, 
where $\Delta$ is the $({M_{t+1}} - 1)$-dimensional probability simplex, i.e., $\Delta:= \{ \gamma \in \mathbb{R}^{M_{t+1}} : \sum_{i=1}^{M_{t+1}} \gamma_i = 1, \gamma_i \geq 0 \; \forall i=1, \ldots, {M_{t+1}}\}$, and $\mathcal{S}:= \{1, \ldots, N\}$.
Under Assumption~\ref{ass:sc}, the objective function of \eqref{con_opt} is lower semicontinuous in $(\bm{u}, \gamma) \in \mathcal{U}(\bm{x}) \times \Delta^N$, and the feasible set is compact. Therefore, by \cite[Proposition D.5]{Hernandez2012}, \eqref{con_opt} admits an optimal solution, and $\hat{T}_t$ maps $\mathbb{B}_b(\mathcal{Z}_{t+1})$ to $\mathbb{B}_b (\mathcal{Z}_t)$.
Moreover, $\hat{T}_t$ is monotone, i.e., for any $v, v' \in \mathbb{B}_b(\mathcal{Z}_{t+1})$ such that $v \leq v'$, we have $\hat{T}_t v \leq \hat{T}_t v'$.
The constrained-optimization problem in \eqref{con_opt} for a fixed $\bm{x} \in \mathcal{Z}_t$ can be numerically solved to obtain a globally optimal solution  using several existing convex optimization algorithms (e.g.,~\cite{Nesterov2018, Boyd2004, Bertsekas2015}) if the problem is convex,  i.e., $\bm{u} \mapsto r(\bm{x}, \bm{u})$ is a convex function and $\bm{u} \mapsto f(\bm{x}, \bm{u}, \hat{\xi}^{[s]})$ is an affine function.\footnote{More precisely, the set $\mathcal{U}(\bm{x})$ needs to be represented by convex inequalities, i.e.,
there exist functions $a_k: \mathcal{X} \times \mathbb{R}^m \to \mathbb{R}$ and $b_k: \mathcal{X} \to \mathbb{R}$ such that
\[
\mathcal{U}(\bm{x}) := \{ \bm{u} \in \mathbb{R}^m : a_k (\bm{x}, \bm{u}) \leq b_k(\bm{x}), k=1, \ldots, N_{ineq}\},
\]
 where $\bm{u} \mapsto a_k (\bm{x}, \bm{u})$ is a convex function for each fixed $\bm{x} \in \mathcal{X}$ and each $k$.}

 Note that for each sample index $s$ the term $v(f(\bm{x}, \bm{u}, \hat{\xi}^{[s]}))$ in the original Bellman operator is approximated by $\sum_{i=1}^{M_{t+1}} \gamma_{s,i} v (\bm{x}^{[i]})$, where the auxiliary weight variable $\gamma_{s,i}$ can be interpreted as the degree to which $f (\bm{x}, \bm{u}, \hat{\xi}^{[s]})$ is represented by $\bm{x}^{[i]}$.
This idea of representing the next state as a convex combination of grid points or nodes has also been adopted in the analysis and design of semi-Lagrangian schemes for Hamilton--Jacobi--Bellman partial differential equations arising in continuous-time optimal control problems~\cite{Falcone2013}.  
Although this paper focuses on discrete-time DP problems, it would be an interesting future research to extend our method to the continuous-time setting, possibly using advanced techniques developed in the recent literature on semi-Lagrangian methods~(e.g., \cite{Alla2019, Picarelli2020}). 

When $v$ is convex, we immediately observe that $T_t v$ is upper bounded by $\hat{T}_t v$  on $\mathcal{Z}_{t}$  because
\[
v (f(\bm{x}, \bm{u}, \hat{\xi}^{[s]})) = v \bigg (\sum_{i=1}^{M_{t+1}} \gamma_{s,i} \bm{x}^{[i]}  \bigg )  \leq \sum_{i=1}^{M_{t+1}}  \gamma_{s,i} v (\bm{x}^{[i]}).
\]
Further showing that $T_t v$ is lower bounded by $\hat{T}_t v - C\delta$ for some positive constant $C$, we prove that $\hat{T}_t v$ converges uniformly to ${T}_t v$ on $\mathcal{Z}_{t}$ as the maximum distance $\delta$ between neighboring nodes tends to zero.
By definition, $\hat{T}_t$ depends  on the discretization parameter $\delta$ as well as $N$. For notational simplicity, however, we suppress the dependence on $\delta$ and $N$.

\subsection{\bf Error Bound and Convergence}

In this section, we assume the convexity of $v$ and  show the uniform convergence property of our method. This assumption will be relaxed in Section~\ref{sec:nonconv}.
\begin{proposition}\label{prop:bd}
Suppose that Assumption~\ref{ass:sc} holds, and that 
$v \in \mathbb{L}_b(\mathcal{Z}_{t+1})$ is convex. 
Then, we have
\[
(\hat{T}_t v )(\bm{x}) -  L_v \delta \leq (T_t v )(\bm{x}) \leq (\hat{T}_t v ) (\bm{x}) \quad \forall \bm{x} \in \mathcal{Z}_{t},
\]
where
$L_v := \max_{j = 1,\ldots, N_{\mathcal{C}, t+1}} \sup_{\bm{x}, \bm{x}' \in \mathcal{C}_j: \bm{x} \neq \bm{x}'} \frac{\| v (\bm{x}) - v (\bm{x}')  \|}{\| \bm{x} - \bm{x}'  \|}$, and therefore $\hat{T}_t v$ converges uniformly to $T_t v$ on $\mathcal{Z}_t$ as $\delta \to 0$.
\end{proposition}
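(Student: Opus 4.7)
The plan is to establish the two inequalities separately and then read off uniform convergence from the fact that the error bound $L_v \delta$ does not depend on $\bm{x} \in \mathcal{Z}_t$.

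For the upper bound $T_t v \leq \hat{T}_t v$, I would argue that every feasible $(\bm{u}, \gamma)$ for \eqref{con_opt} yields an admissible $\bm{u}$ for \eqref{o_opt} with larger (or equal) objective value. Indeed, for each $s$ the constraint $f(\bm{x}, \bm{u}, \hat{\xi}^{[s]}) = \sum_i \gamma_{s,i} \bm{x}^{[i]}$ with $\gamma_s \in \Delta$ expresses the next state as a convex combination of nodes; convexity of $v$ then gives, via Jensen's inequality, $v(f(\bm{x}, \bm{u}, \hat{\xi}^{[s]})) \leq \sum_i \gamma_{s,i} v(\bm{x}^{[i]})$. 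Multiplying by $p_s \geq 0$, summing over $s$, and adding $r(\bm{x}, \bm{u})$ shows the $\hat{T}_t v$ objective dominates the $T_t v$ objective at the same $\bm{u}$. Taking infima yields the upper bound.

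For the lower bound, I would fix any $\bm{u} \in \mathcal{U}(\bm{x})$ and construct a specific $\gamma$ feasible for \eqref{con_opt} whose objective exceeds $r(\bm{x}, \bm{u}) + \sum_s p_s v(f(\bm{x}, \bm{u}, \hat{\xi}^{[s]}))$ by at most $L_v \delta$. By the inclusion property \eqref{inclusion}, $f(\bm{x}, \bm{u}, \hat{\xi}^{[s]}) \in \mathcal{Z}_{t+1}$, so it lies in some cell $\mathcal{C}_{j(s)}$, which by construction is the convex hull of a subset of the nodes $\{\bm{x}^{[i]}\}_{i=1}^{M_{t+1}}$. Hence one can choose $\gamma_s \in \Delta$ supported on the vertices of $\mathcal{C}_{j(s)}$ with $f(\bm{x}, \bm{u}, \hat{\xi}^{[s]}) = \sum_i \gamma_{s,i} \bm{x}^{[i]}$. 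For indices $i$ with $\gamma_{s,i} > 0$, both $\bm{x}^{[i]}$ and $f(\bm{x}, \bm{u}, \hat{\xi}^{[s]})$ lie in $\mathcal{C}_{j(s)}$, so $\|\bm{x}^{[i]} - f(\bm{x}, \bm{u}, \hat{\xi}^{[s]})\| \leq \delta$. Using the definition of $L_v$, this gives
\[
\sum_{i=1}^{M_{t+1}} \gamma_{s,i} v(\bm{x}^{[i]}) - v(f(\bm{x}, \bm{u}, \hat{\xi}^{[s]})) = \sum_{i} \gamma_{s,i} \bigl[ v(\bm{x}^{[i]}) - v(f(\bm{x}, \bm{u}, \hat{\xi}^{[s]})) \bigr] \leq L_v \delta.
\]
Weighting by $p_s$, summing over $s$ (and using $\sum_s p_s = 1$), and adding $r(\bm{x}, \bm{u})$ shows that the $\hat{T}_t v$ objective at $(\bm{u}, \gamma)$ is at most $r(\bm{x}, \bm{u}) + \sum_s p_s v(f(\bm{x}, \bm{u}, \hat{\xi}^{[s]})) + L_v \delta$. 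Taking the infimum over $\bm{u}$ on the right yields $\hat{T}_t v(\bm{x}) \leq T_t v(\bm{x}) + L_v \delta$, which is the desired lower bound.

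Uniform convergence then follows because $L_v \delta$ is independent of $\bm{x}$. The only mildly delicate point, which I would flag but not dwell on, is that $L_v$ must be finite; this is built into the statement of the proposition via the definition of $L_v$ (and is automatic for convex bounded $v$ whenever each $\mathcal{C}_j$ lies in the relative interior of $\mathcal{Z}_{t+1}$, by the standard local-Lipschitz property of convex functions). The main conceptual obstacle in the proof is the explicit construction of the feasible $\gamma$ in the lower bound argument; once the cell-based decomposition of $\mathcal{Z}_{t+1}$ from Section~\ref{sec:dis} is invoked, the rest is a clean Jensen/Lipschitz estimate.
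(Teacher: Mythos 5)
Your proof is correct and follows essentially the same route as the paper's: Jensen's inequality on a feasible $(\bm{u},\gamma)$ for the upper bound, and an explicit cell-supported $\gamma$ combined with the $L_v$ estimate for the lower bound. The only cosmetic difference is that you quantify over all feasible points and take infima at the end, whereas the paper works with attained optimal solutions $(\hat{\bm{u}},\hat{\gamma})$ and $\bm{u}^\star$ (which exist under Assumption~\ref{ass:sc}); your flag about the finiteness of $L_v$ matches the remark the paper places just before its proof.
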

 Note that $L_v < \infty$ because $v$ is continuous on the compact set $\mathcal{Z}_{t+1}$ under the assumption of convexity and lower semicontinuity.
\begin{proof}
Fix an arbitrary $\bm{x} \in \mathcal{Z}_t$. 
We first show that $(T_t v) (\bm{x}) \leq (\hat{T}_t v)(\bm{x})$.
Under Assumption~\ref{ass:sc}, the objective function of \eqref{con_opt} is lower semicontinuous, and the feasible set is compact. 
Let $(\hat{\bm{u}}, \hat{\gamma}) \in \mathcal{U}(\bm{x}) \times \Delta^N$ be an optimal solution to \eqref{con_opt}, i.e., it satisfies
\begin{equation}\nonumber
\begin{split}
&(\hat{T}_t v)(\bm{x}) = r(\bm{x}, \hat{\bm{u}}) +  \sum_{s=1}^N \sum_{i=1}^{M_{t+1}} p_s \hat{\gamma}_{s,i} v  (\bm{x}^{[i]})\\
&f(\bm{x}, \hat{\bm{u}}, \hat{\xi}^{[s]}) = \sum_{i=1}^{M_{t+1}} \hat{\gamma}_{s,i} \bm{x}^{[i]} \in {\mathcal{Z}_{t+1}} \quad \forall s \in \mathcal{S}.
\end{split}
\end{equation}
The convexity of $v$ on $\mathcal{Z}_{t+1}$  implies that
\begin{equation} \nonumber
\begin{split}
 v \bigg (
 \sum_{i=1}^{M_{t+1}} \hat{\gamma}_{s,i} \bm{x}^{[i]}
 \bigg ) \leq \sum_{i=1}^{M_{t+1}} \hat{\gamma}_{s,i} v (\bm{x}^{[i]}) \quad \forall s \in \mathcal{S}.
\end{split}
\end{equation} 
Therefore, by the definition of $(\hat{\bm{u}}, \hat{\gamma})$, we have
\begin{equation} \nonumber
\begin{split}
(\hat{T}_t v)(\bm{x}) &\geq r(\bm{x}, \hat{\bm{u}}) +  \sum_{s=1}^N p_s v \bigg (
 \sum_{i=1}^{M_{t+1}} \hat{\gamma}_{s,i} \bm{x}^{[i]}
 \bigg )\\
 &= r(\bm{x}, \hat{\bm{u}}) +  \sum_{s=1}^N p_s v (f(\bm{x}, \hat{\bm{u}},
 \hat{\xi}^{[s]}))\\
 &\geq \inf_{\bm{u} \in \mathcal{U}(\bm{x})} \;   r(\bm{x}, {\bm{u}}) + \sum_{s=1}^N p_s v  (f(\bm{x}, {\bm{u}},
  \hat{\xi}^{[s]})) \\
 &= (T_t v )(\bm{x}).
\end{split}
\end{equation}

We now show that $(\hat{T}_t v)(\bm{x})- L_v \delta \leq (T_t v)(\bm{x})$.
Under Assumption~\ref{ass:sc}, 
the objective function of \eqref{o_opt} is lower semicontinuous~\cite[Lemma 8.5.5]{Hernandez2012b}, and the feasible set is compact. 
Thus, by \cite[Proposition D.5]{Hernandez2012}, 
 it admits an optimal solution, i.e.,
there exists $\bm{u}^\star \in \mathcal{U}(\bm{x})$ such that
\[
(T_t v )(\bm{x}) =  r(\bm{x}, \bm{u}^\star) + \sum_{s=1}^N p_s v  (f (\bm{x}, \bm{u}^\star, \hat{\xi}^{[s]})).
\]
Let
\[
y_s:= f(\bm{x}, \bm{u}^\star, \hat{\xi}^{[s]}) \quad \forall s \in \mathcal{S}.
\]
Then, $y_s  \in \mathcal{Z}_{t+1}$ because $\bm{x} \in \mathcal{Z}_t$.
Thus, 
 there exists a unique $j \in \{1, \ldots, N_{\mathcal{C}, t+1}\}$ such that
$y_s \in \mathcal{C}_j$.
Let $\mathcal{N}(y_s)$ denote the set of grid points on the cell $\mathcal{C}_j$ that contains $y_s$.
For each $s$, choose $\gamma_{s}^\star \in \Delta$ such that
$f(\bm{x}, \bm{u}^\star, \hat{\xi}^{[s]}) = \sum_{i \in \mathcal{N}(y_s)} \gamma_{s,i}^\star \bm{x}^{[i]}$ and $\sum_{i \in \mathcal{N}(y_s)} \gamma_{s,i}^\star = 1$. Note that $\gamma_{s,i}^\star = 0$ for all $i \notin \mathcal{N}(y_s)$.
We then have
\begin{equation}\label{tv}
\begin{split}
(T_t  v )(\bm{x}) 
 &= r(\bm{x}, \bm{u}^\star) +  \sum_{s=1}^N p_s v (f( \bm{x}, \bm{u}^\star, \hat{\xi}^{[s]}))\\
&= r(\bm{x}, \bm{u}^\star) +  \sum_{s=1}^N p_s v\bigg (
 \sum_{i \in \mathcal{N} (y_s)} \gamma_{s,i}^\star \bm{x}^{[i]}
 \bigg ).
\end{split}
\end{equation}
By the definition of $L_v$, we have
\[
v  \bigg (
 \sum_{i \in \mathcal{N} (y_s)} \gamma_{s,i}^\star \bm{x}^{[i]}
 \bigg ) \geq 
v (\bm{x}^{[i']}) - L_v  \bigg \| \bm{x}^{[i']} 
-\sum_{i \in \mathcal{N} (y_s)} \gamma_{s,i}^\star \bm{x}^{[i]}
\bigg  \|
\]
  for all $i' \in \mathcal{N}(y_s)$.
This implies that
\begin{equation} \label{ineq}
\begin{split}
v \bigg (
 \sum_{i \in \mathcal{N} (y_s)} \gamma_{s,i}^\star \bm{x}^{[i]}
 \bigg ) 
&\geq \max_{i' \in \mathcal{N}(y_s)} \bigg [ v (\bm{x}^{[i']}) - L_v  \bigg \| \bm{x}^{[i']} 
-\sum_{i \in \mathcal{N} (y_s)} \gamma_{s,i}^\star \bm{x}^{[i]}
 \bigg \| \bigg ]\\
 &\geq  \max_{i' \in \mathcal{N}(y_s)} \bigg [ v (\bm{x}^{[i']}) - L_v   \sum_{i \in \mathcal{N} (y_s)} \gamma_{s,i}^\star \| \bm{x}^{[i']} 
- \bm{x}^{[i]}
 \| \bigg ]\\
&\geq \max_{i' \in \mathcal{N}(y_s)} \bigg [ v (\bm{x}^{[i']})- L_v  \sum_{i \in \mathcal{N} (y_s)} \gamma_{s,i}^\star \delta \bigg ]\\
&\geq  \sum_{i' \in \mathcal{N}(y_s)} \gamma_{s, i'}^\star ( v (\bm{x}^{[i']}) - L_v  \delta )\\
&= \sum_{i' = 1}^{M_{t+1}} \gamma_{s, i'}^\star v (\bm{x}^{[i']}) - L_v   \delta,
\end{split}
\end{equation}
where the third inequality holds by the definition of $\delta$, and 
the last equality  holds because $\sum_{i' \in \mathcal{N}(y_s)} \gamma_{s,i'}^\star = 1$.
Combining \eqref{tv} and \eqref{ineq}, we obtain that
\begin{equation}\nonumber
\begin{split}
(T_t v )(\bm{x}) 
 &\geq r(\bm{x}, \bm{u}^\star) +   \sum_{s=1}^N \sum_{i=1}^{M_{t+1}} p_s \gamma_{s,i}^\star v (\bm{x}^{[i]}) -  L_v \delta\\
&\geq (\hat{T}_t v )(\bm{x}) -  L_v \delta,
\end{split}
\end{equation}
where the second inequality holds because $(\bm{u}^\star, \gamma^\star)$ is a feasible solution to \eqref{con_opt}.
Therefore, the result follows. 
\end{proof}

Let $\hat{v}_t: \mathcal{Z}_t \to \mathbb{R}$ be defined by 
\[
\hat{v}_t (\bm{x}) :=( \hat{T}_t \hat{v}_{t+1})(\bm{x}) \quad \forall \bm{x} \in \mathcal{Z}_t
\]
for $t = 0, \ldots, K-1$,
with $\hat{v}_K \equiv q$ on $\mathcal{Z}_K$.
We now show that $\hat{v}_t$  converges uniformly to the optimal value function $v_t^\star$ on $\mathcal{Z}_t$ as $\delta$ tends to zero when all $v_t^\star$'s are convex.  
A sufficient condition for the convexity of $v_t^\star$'s is provided in Section~\ref{sec:vi}.

\begin{theorem}[Uniform Convergence and Error Bound I]\label{thm:bd1}
Suppose that Assumption~\ref{ass:sc} holds, and that the optimal value function $v_t^\star$ is convex on $\mathcal{Z}_{t}$ for all $t = 0, \ldots, K$.
Then, we have
\begin{equation}\label{fin_bd}
  0 \leq \hat{v}_t(\bm{x}) - v_t^\star (\bm{x}) \leq \sum_{k = t+1}^K L_k \delta   \quad \forall \bm{x} \in \mathcal{Z}_t \; \forall t =0, \ldots, K,
\end{equation}
where $L_k :=  \sup_{j = 1,\ldots, N_{\mathcal{C},k}} \sup_{\bm{x}, \bm{x}' \in \mathcal{C}_j: \bm{x} \neq \bm{x}'} \frac{\| v_k^\star(\bm{x}) - v_k^\star(\bm{x}')  \|}{\| \bm{x} - \bm{x}'  \|}$,
and therefore
$\hat{v}_t$ converges uniformly to $v_t^\star$ on $\mathcal{Z}_t$ as $\delta \to 0$.
\end{theorem}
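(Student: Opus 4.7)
The proof will proceed by backward induction on $t$, starting from $t=K$, where equality $\hat{v}_K \equiv q \equiv v_K^\star$ on $\mathcal{Z}_K$ gives the base case with an empty sum on the right of~\eqref{fin_bd}. The argument rests on three properties of the approximate Bellman operator that are already in hand: (a) Proposition~\ref{prop:bd}, which under the convexity hypothesis on $v_{t+1}^\star$ yields $0 \le (\hat{T}_t v_{t+1}^\star)(\bm{x}) - (T_t v_{t+1}^\star)(\bm{x}) \le L_{t+1}\delta$; (b) the monotonicity of $\hat{T}_t$ on $\mathbb{B}_b(\mathcal{Z}_{t+1})$ stated right after the definition~\eqref{con_opt}; and (c) the constant-shift identity $\hat{T}_t(v + c) = \hat{T}_t v + c$ for $c \in \mathbb{R}$, which is immediate from the simplex constraints $\sum_{i} \gamma_{s,i} = 1$ together with $\sum_{s} p_s = 1$ in~\eqref{con_opt}, since the extra $c$ factors out of the objective without perturbing the feasible set.

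For the inductive step, assume \eqref{fin_bd} holds at stage $t+1$, equivalently $v_{t+1}^\star \le \hat{v}_{t+1} \le v_{t+1}^\star + C$ on $\mathcal{Z}_{t+1}$ with $C := \sum_{k=t+2}^K L_k\delta$. I would then split
\[
\hat{v}_t(\bm{x}) - v_t^\star(\bm{x})
= \bigl[(\hat{T}_t \hat{v}_{t+1})(\bm{x}) - (\hat{T}_t v_{t+1}^\star)(\bm{x})\bigr]
+ \bigl[(\hat{T}_t v_{t+1}^\star)(\bm{x}) - (T_t v_{t+1}^\star)(\bm{x})\bigr].
\]
Properties (b) and (c) sandwich the first bracket in $[0, C]$, and (a) sandwiches the second in $[0, L_{t+1}\delta]$, whence $0 \le \hat{v}_t(\bm{x}) - v_t^\star(\bm{x}) \le L_{t+1}\delta + C = \sum_{k=t+1}^K L_k\delta$, closing the induction. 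Uniform convergence on $\mathcal{Z}_t$ as $\delta \to 0$ then follows because the bound is independent of $\bm{x}$ and each $L_k$ is a fixed finite constant: convexity plus lower semicontinuity of $v_k^\star$ on the compact convex set $\mathcal{Z}_k$ yields continuity, hence a finite global Lipschitz constant on $\mathcal{Z}_k$ that upper bounds $L_k$ uniformly in $\delta$, as noted just after Proposition~\ref{prop:bd}.

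There is no serious obstacle beyond checking the constant-shift identity (c), but this is precisely the point that matters: it is what makes the backward propagation of the per-step error $L_{t+1}\delta$ additive over the $K-t$ remaining stages rather than multiplicative. Without (c), the inductive step would require a Lipschitz-type estimate on $\hat{T}_t$ that amplifies the accumulated constant $C$ and would produce a considerably weaker bound than~\eqref{fin_bd}. The one-sided direction $\hat{v}_t \ge v_t^\star$ additionally reflects the fact that the simplex relaxation in~\eqref{con_opt} always upper bounds the true Bellman target when the value function is convex, which is exactly the content of the right-hand inequality in Proposition~\ref{prop:bd}.
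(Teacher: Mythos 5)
Your proof is correct and follows essentially the same route as the paper: backward induction combining Proposition~\ref{prop:bd} applied to $v_{t+1}^\star$ with the monotonicity of $\hat{T}_t$ to propagate the sandwich $v_{t+1}^\star \le \hat{v}_{t+1} \le v_{t+1}^\star + C$. The one refinement worth noting is that you make explicit the constant-shift identity $\hat{T}_t(v+c)=\hat{T}_t v + c$, which the paper uses implicitly when it attributes the step $\hat{T}_{t-1}\hat{v}_t - C \le \hat{T}_{t-1}v_t^\star$ to monotonicity alone; your justification via the simplex constraints and $\sum_s p_s = 1$ is exactly the right one.
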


\begin{proof}
We use mathematical induction to prove \eqref{fin_bd}.
For $t = K$,  $\hat{v}_K = v_K^\star \equiv q$ on $\mathcal{Z}_K$, and thus \eqref{fin_bd} holds.
Suppose that the induction hypothesis holds for some $t$.
By applying the operator $\hat{T}_{t-1}$ on all sides of \eqref{fin_bd}, 
we have for all $\bm{x} \in \mathcal{Z}_{t-1}$
\[
(\hat{T}_{t-1} \hat{v}_t)(\bm{x}) - \sum_{k=t+1}^K L_k \delta \leq (\hat{T}_{t-1}v_t^\star)(\bm{x}) \leq (\hat{T}_{t-1} \hat{v}_t)(\bm{x})
\]
by the monotonicity of $\hat{T}_{t-1}$.
On the other hand,
by Proposition~\ref{prop:bd}, 
\[
(\hat{T}_{t-1} v_{t}^\star)(\bm{x})  -  L_{t} \delta \leq (T_{t-1} v_{t}^\star)(\bm{x}) \leq (\hat{T}_{t-1}v_{t}^\star)(\bm{x}) \quad \forall \bm{x} \in \mathcal{Z}_{t-1}
\]
because $v_t^\star \in \mathbb{L}_b(\mathcal{Z}_t)$~\cite[Lemma 8.5.5]{Hernandez2012b} and it is convex.
Therefore, we conclude that
\[
(\hat{T}_{t-1} \hat{v}_t) (\bm{x}) - \sum_{k = t}^K L_k \delta \leq (T_{t-1} v_t^\star)(\bm{x}) \leq (\hat{T}_{t-1} \hat{v}_t)(\bm{x}) \; \forall \bm{x} \in \mathcal{Z}_{t-1}.
\]
Note that $\hat{v}_{t-1} = \hat{T}_{t-1} v_t$  and $v_{t-1}^\star = T_{t-1} v_t^\star$ on $\mathcal{Z}_{t-1}$ by definition.
This implies that $\hat{v}_{t-1} (\bm{x}) - \sum_{k=t}^K L_k \delta \leq v_{t-1}^\star (\bm{x}) \leq \hat{v}_{t-1} (\bm{x})$ for all $\bm{x} \in \mathcal{Z}_{t-1}$ as desired. \qed
\end{proof}

The approximate value function $\hat{v}_{t+1}$ evaluated at the nodes $\bm{x}^{[1]}, \ldots, \bm{x}^{[M_{t+1}]}$ for $t = 0, \ldots, K$ can be used to construct a deterministic stationary policy, $\hat{\pi} := (\hat{\pi}_0, \ldots, \hat{\pi}_{K-1})$, by setting 
\begin{equation}\nonumber
\begin{split}
\hat{\pi}_t (\bm{x}) \in \argmin_{\bm{u} \in \mathcal{U}(\bm{x})} \min_{\gamma \in \Delta^N} &\bigg [r(\bm{x}, \bm{u}) +  \sum_{s=1}^N \sum_{i=1}^{M_{t+1}} p_s \gamma_{s,i} \hat{v}_{t+1}(\bm{x}^{[i]}) \bigg ]\\
\mbox{s.t.} \; & f(\bm{x}, \bm{u}, \hat{\xi}^{[s]}) = \sum_{i=1}^{M_{t+1}} \gamma_{s,i} \bm{x}^{[i]} \quad \forall s \in \mathcal{S}
\end{split}
\end{equation}
for all $\bm{x} \in \mathcal{Z}_{t}$.
Let $v_t^{\hat{\pi}}: \mathcal{Z}_t \to \mathbb{R}$ be defined by
\begin{equation} \nonumber
\begin{split}
v_t^{\hat{\pi}} (\bm{x}) &:= (T^{\hat{\pi}_t}v_{t+1}^{\hat{\pi}}) (\bm{x})\\
&:= r(\bm{x}, \hat{\pi}_t(\bm{x})) +  \sum_{s=1}^N  p_s v_{t+1}^{\hat{\pi}}(f(\bm{x}, \hat{\pi}_t(\bm{x}), \hat{\xi}^{[s]})) \quad \forall \bm{x} \in \mathcal{Z}_t
\end{split}
\end{equation}
with $v_K^{\hat{\pi}} \equiv q$ on $\mathcal{Z}_K$. 
It is straightforward to check that $T^{\hat{\pi}_t}$ is monotone. 
To show that the cost-to-go function $v_t^{\hat{\pi}}$ of  $\hat{\pi}$ converges uniformly to the optimal value function $v_t^\star$, we first observe the following:

\begin{lemma}\label{lem:compare}
Suppose that Assumption~\ref{ass:sc}  holds, and that $\hat{v}_t$ is convex on $\mathcal{Z}_t$ for all $t=0, \ldots, K$. Then, we have
\[
v_t^{\hat{\pi}} (\bm{x}) \leq \hat{v}_t (\bm{x}) \quad \forall \bm{x} \in \mathcal{Z}_t
\]
for all $t=0, \ldots, K$.
\end{lemma}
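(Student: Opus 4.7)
The plan is to prove the claim by backward induction on $t$, using the convexity of $\hat{v}_{t+1}$ to invoke Jensen's inequality on the auxiliary weights produced by the approximate Bellman operator.

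The base case $t=K$ is immediate since $v_K^{\hat{\pi}} \equiv q \equiv \hat{v}_K$ on $\mathcal{Z}_K$. For the inductive step, suppose $v_{t+1}^{\hat{\pi}}(\bm{y}) \leq \hat{v}_{t+1}(\bm{y})$ for all $\bm{y} \in \mathcal{Z}_{t+1}$. Fix $\bm{x} \in \mathcal{Z}_t$ and pick an optimal solution $(\hat{\bm{u}},\hat{\gamma})$ to the convex problem \eqref{con_opt} that defines $(\hat{T}_t \hat{v}_{t+1})(\bm{x})$, so that $\hat{\bm{u}} = \hat{\pi}_t(\bm{x})$ and
\[
f(\bm{x},\hat{\bm{u}},\hat{\xi}^{[s]}) = \sum_{i=1}^{M_{t+1}} \hat{\gamma}_{s,i}\,\bm{x}^{[i]} \quad \forall s \in \mathcal{S}.
\]

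For each $s$, the inclusion \eqref{inclusion} guarantees that $f(\bm{x},\hat{\bm{u}},\hat{\xi}^{[s]}) \in \mathcal{Z}_{t+1}$, so the induction hypothesis applies at that point. Chaining this with the convexity of $\hat{v}_{t+1}$ on $\mathcal{Z}_{t+1}$ (which yields Jensen's inequality for the convex combination $\sum_i \hat{\gamma}_{s,i}\bm{x}^{[i]}$) gives
\[
v_{t+1}^{\hat{\pi}}\bigl(f(\bm{x},\hat{\bm{u}},\hat{\xi}^{[s]})\bigr) \leq \hat{v}_{t+1}\bigl(f(\bm{x},\hat{\bm{u}},\hat{\xi}^{[s]})\bigr) = \hat{v}_{t+1}\Bigl(\sum_{i=1}^{M_{t+1}} \hat{\gamma}_{s,i}\bm{x}^{[i]}\Bigr) \leq \sum_{i=1}^{M_{t+1}} \hat{\gamma}_{s,i}\,\hat{v}_{t+1}(\bm{x}^{[i]}).
\]
Multiplying by $p_s$, summing over $s$, and adding $r(\bm{x},\hat{\bm{u}})$ produces $v_t^{\hat{\pi}}(\bm{x})$ on the left and exactly the objective value of $(\hat{\bm{u}},\hat{\gamma})$ in \eqref{con_opt} on the right, which equals $\hat{v}_t(\bm{x})$ by optimality.

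The main subtlety to be aware of is that $v_{t+1}^{\hat{\pi}}$ need not itself be convex, so Jensen's inequality cannot be applied directly to the cost-to-go function; the two-step bound above sidesteps this by first dominating $v_{t+1}^{\hat{\pi}}$ by $\hat{v}_{t+1}$ via the induction hypothesis, and only then using convexity of $\hat{v}_{t+1}$. The other point to be careful about is that the induction hypothesis is used only at points of $\mathcal{Z}_{t+1}$, which is precisely where both the grid nodes $\bm{x}^{[i]}$ and the next states $f(\bm{x},\hat{\bm{u}},\hat{\xi}^{[s]})$ lie thanks to the construction of $\{\mathcal{Z}_t\}$ in Section~\ref{sec:dis}.
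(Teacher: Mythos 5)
Your proof is correct and follows essentially the same route as the paper's: backward induction with base case $t=K$, then the two-step bound $v_t^{\hat{\pi}}(\bm{x}) = (T^{\hat{\pi}_t}v_{t+1}^{\hat{\pi}})(\bm{x}) \leq (T^{\hat{\pi}_t}\hat{v}_{t+1})(\bm{x}) \leq (\hat{T}_t\hat{v}_{t+1})(\bm{x}) = \hat{v}_t(\bm{x})$, where the first inequality is the induction hypothesis (stated in the paper as monotonicity of $T^{\hat{\pi}_t}$) and the second is Jensen's inequality applied to the convex $\hat{v}_{t+1}$ at the convex combination $\sum_i\hat{\gamma}_{s,i}\bm{x}^{[i]}$. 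Your remark on why convexity of $v_{t+1}^{\hat{\pi}}$ itself is not needed correctly identifies the point of the argument's ordering.
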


\begin{proof}
We use mathematical induction. 
For $t = K$, $v_K^{\hat{\pi}} = \hat{v}_K \equiv q$ on $\mathcal{Z}_K$, and thus the induction hypothesis holds. 
Suppose that $v_{t+1}^{\hat{\pi}}  \leq \hat{v}_{t+1}$  on $\mathcal{Z}_{t+1}$ for some ${t+1}$.
By the monotonicity of $T^{\hat{\pi}_t}$, we have 
\begin{equation} \label{mono}
(T^{\hat{\pi}_t} v_{t+1}^{\hat{\pi}})(\bm{x}) \leq (T^{\hat{\pi}_t} \hat{v}_{t+1})(\bm{x}) \quad \forall \bm{x} \in \mathcal{Z}_{t}.
\end{equation}
Fix an arbitrary $\bm{x} \in \mathcal{Z}_{t}$.
By the definition of $\hat{T}_t$ and $\hat{\pi}$,
under Assumption~\ref{ass:sc}, there exists $\hat{\gamma}\in \Delta^N$ such that
\begin{equation}\label{compare1}
(\hat{T}_t \hat{v}_{t+1}) (\bm{x})  = r(\bm{x}, \hat{\pi}_t(\bm{x})) 
+   \sum_{s=1}^N \sum_{i=1}^{M_{t+1}} p_s \hat{\gamma}_{s,i} \hat{v}_{t+1}(\bm{x}^{[i]})
\end{equation}
and
\[
f(\bm{x}, \hat{\pi}_t(\bm{x}), \hat{\xi}^{[s]}) = \sum_{i=1}^{M_{t+1}} \hat{\gamma}_{s,i} \bm{x}^{[i]} \quad \forall s \in \mathcal{S}.
\]
By the convexity of $\hat{v}_{t+1}$, we have
\begin{equation}\label{compare2}
\begin{split}
\sum_{i=1}^{M_{t+1}} \hat{\gamma}_{s,i} \hat{v}_{t+1} (\bm{x}^{[i]}) &\geq
\hat{v}_{t+1} \bigg ( \sum_{i=1}^{M_{t+1}} \hat{\gamma}_{s,i} \bm{x}^{[i]} \bigg )\\
&= \hat{v}_{t+1} (f(\bm{x}, \hat{\pi}_t(\bm{x}), \hat{\xi}^{[s]}) )
\end{split}
\end{equation}
for each $s \in \mathcal{S}$.
By combining the inequalities \eqref{compare1} and \eqref{compare2}, we obtain that
\begin{equation}\nonumber
\begin{split}
(\hat{T}_t \hat{v}_{t+1}) (\bm{x})   &\geq
r(\bm{x}, \hat{\pi}_t(\bm{x})) + \sum_{s=1}^N p_s \hat{v}_{t+1} (f(\bm{x}, \hat{\pi}_t(\bm{x}), \hat{\xi}^{[s]}) )\\
&= (T^{\hat{\pi}_t} \hat{v}_{t+1} )(\bm{x}).
\end{split}
\end{equation}
Recall \eqref{mono}, we conclude that for all $\bm{x} \in \mathcal{Z}_t$,
\[
v_{t}^{\hat{\pi}} (\bm{x})  = (T^{\hat{\pi}_t} v_{t+1}^{\hat{\pi}})(\bm{x}) \leq (T^{\hat{\pi}_t} \hat{v}_{t+1})(\bm{x})
\leq (\hat{T}_t \hat{v}_{t+1})(\bm{x}) = \hat{v}_{t}(\bm{x}).
\]
This completes mathematical induction, and the result follows. \qed
\end{proof}

By using this lemma and Theorem~\ref{thm:bd1}, we obtain the following uniform convergence result:
\begin{theorem}[Uniform Convergence and Error Bound II]\label{cor:bd_conv}
Suppose that Assumption~\ref{ass:sc}  holds, and that $v^\star_t$ and $\hat{v}_t$ are convex on $\mathcal{Z}_t$ for all $t$.
Then, we have
\[
0 \leq  {v}_t^{\hat{\pi}}(\bm{x}) - v_t^\star (\bm{x}) \leq \sum_{k = t+1}^K L_k \delta \quad \forall \bm{x} \in \mathcal{Z}_t \; \forall t =0, \ldots, K,
\]
where $L_k :=  \sup_{j = 1,\ldots, N_{\mathcal{C},k}} \sup_{\bm{x}, \bm{x}' \in \mathcal{C}_j: \bm{x} \neq \bm{x}'} \frac{\| v_k^\star(\bm{x}) - v_k^\star(\bm{x}')  \|}{\| \bm{x} - \bm{x}'  \|}$, and therefore
${v}_t^{\hat{\pi}}$ converges uniformly to $v_t^\star$ on $\mathcal{Z}_t$ as $\delta \to 0$.
\end{theorem}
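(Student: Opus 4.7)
The plan is to derive this corollary essentially by composing the two facts already established in the excerpt: the cost-to-go $v_t^{\hat{\pi}}$ is pointwise dominated by the approximate value function $\hat{v}_t$ (Lemma~\ref{lem:compare}), and $\hat{v}_t$ is in turn controlled by $v_t^\star$ from above with the explicit error $\sum_{k=t+1}^K L_k \delta$ (Theorem~\ref{thm:bd1}). So no fresh induction or convexity argument is needed here; the work has already been done.

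For the lower bound $0 \leq v_t^{\hat{\pi}}(\bm{x}) - v_t^\star(\bm{x})$, I would simply invoke optimality: since $\hat{\pi} = (\hat{\pi}_0,\ldots,\hat{\pi}_{K-1}) \in \Pi^{DM}$ is an admissible deterministic Markov policy and $v_t^{\hat{\pi}}$ coincides with its cost-to-go on $\mathcal{Z}_t$ (by definition and the choice of $\mathcal{Z}_t$'s ensuring the trajectory stays in $\bigcup_k \mathcal{Z}_k$), the dynamic programming principle yields $v_t^\star(\bm{x}) \leq v_t^{\hat{\pi}}(\bm{x})$ for all $\bm{x} \in \mathcal{Z}_t$. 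For the upper bound, I would chain the two inequalities:
\[
v_t^{\hat{\pi}}(\bm{x}) \;\leq\; \hat{v}_t(\bm{x}) \;\leq\; v_t^\star(\bm{x}) + \sum_{k=t+1}^K L_k \delta,
\]
where the first inequality is Lemma~\ref{lem:compare} (which requires the hypothesized convexity of $\hat{v}_t$) and the second is Theorem~\ref{thm:bd1} (which requires the hypothesized convexity of $v_t^\star$). Subtracting $v_t^\star(\bm{x})$ then gives the announced bound.

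Uniform convergence is immediate: since $L_k$ depends only on $v_k^\star$ restricted to the cells $\mathcal{C}_j \subseteq \mathcal{Z}_k$ and not on $\delta$, and since each $L_k$ is finite (the remark following Proposition~\ref{prop:bd} ensures $L_k < \infty$ because $v_k^\star$ is convex and lower semicontinuous on the compact set $\mathcal{Z}_k$), the right-hand side tends to zero as $\delta \to 0$, uniformly in $\bm{x} \in \mathcal{Z}_t$.

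There is no real obstacle; the only subtlety worth flagging in the write-up is that both convexity hypotheses are genuinely used, one for each step of the chain, and that $\hat{\pi}$ is well defined because the inner problem in \eqref{con_opt} admits a minimizer under Assumption~\ref{ass:sc} (lower semicontinuous objective, compact feasible set), so $v_t^{\hat{\pi}}$ makes sense and the invocation of the dynamic programming principle for the lower bound is legitimate.
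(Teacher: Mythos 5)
Your proposal is correct and follows essentially the same route as the paper's own proof: the upper bound comes from chaining Lemma~\ref{lem:compare} ($v_t^{\hat{\pi}} \leq \hat{v}_t$, using convexity of $\hat{v}_t$) with Theorem~\ref{thm:bd1} ($\hat{v}_t \leq v_t^\star + \sum_{k=t+1}^K L_k \delta$, using convexity of $v_t^\star$), and the lower bound is the optimality of $v_t^\star$ as the minimal cost-to-go function. The extra remarks on well-posedness of $\hat{\pi}$ and finiteness of $L_k$ are sound but not part of the paper's (terser) argument.
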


\begin{proof}
Fix an arbitrary $t \in \{0, \ldots, K\}$.
By Theorem~\ref{thm:bd1} and Lemma~\ref{lem:compare},
we first observe that 
\[
v_t^{\hat{\pi}}(\bm{x}) - v_t^\star(\bm{x}) \leq \hat{v}_t (\bm{x})- v_t^\star(\bm{x}) \leq \sum_{k=t+1}^K L_k \delta \quad \forall \bm{x} \in \mathcal{Z}_t.
\]
In addition, we have 
$v_t^\star \leq v_t^{\hat{\pi}}$ since $v_t^\star$ is the minimal cost-to-go function under Assumption~\ref{ass:sc}.
Therefore, the result follows. \qed
\end{proof}

\subsection{\bf Interpolation-Free DP Algorithm} \label{sec:vi}

The error bounds and convergence results established in the previous subsection are valid if $v_t^\star$  and $\hat{v}_t$ are convex.
We now introduce a sufficient condition for the convexity $v_t^\star$ and $\hat{v}_t$.

\begin{assumption}[Linear-Convex Control]\label{ass:lc}
\begin{enumerate}
\item
The function
\[
(\bm{x}, \bm{u}) \mapsto  f(\bm{x}, \bm{u}, \bm{\xi})
\]
is affine on $\mathcal{K} := \{ (\bm{x}, \bm{u}) : \bm{x} \in \mathcal{X}, \bm{u} \in \mathcal{U}(\bm{x}) \}$ for each $\bm{\xi} \in \Xi$. In addition,  the stage-wise cost function $r$ is convex on $\mathcal{K}$, and the terminal cost function $q$ is convex on $\mathcal{X}$.

\item 
If $\bm{u}_{(k)} \in \mathcal{U}(\bm{x}_{(k)})$ for $k=1,2$, then 
\[
\lambda \bm{u}_{(1)} + (1-\lambda) \bm{u}_{(2)} \in \mathcal{U}(\lambda \bm{x}_{(1)} + (1-\lambda) \bm{x}_{(2)})
\]
 for any $\bm{x}_{(1)}, \bm{x}_{(2)} \in \mathcal{X}$ and any $\lambda \in (0,1)$.

\end{enumerate}
\end{assumption}

\begin{lemma}\label{lem:conv}
Suppose that Assumptions~\ref{ass:sc} and~\ref{ass:lc} hold, and that $v:\mathcal{Z}_{t+1} \to \mathbb{R}$ is convex. Then, ${T}_t v, \hat{T}_t v: \mathcal{Z}_t \to \mathbb{R}$ are convex.
\end{lemma}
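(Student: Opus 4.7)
The plan is to prove convexity of each operator separately by the standard device of combining near--minimizers at two base points and using the linear--convex structure from Assumption~\ref{ass:lc} to verify feasibility and bound the objective. Fix $\bm{x}_{(1)}, \bm{x}_{(2)} \in \mathcal{Z}_t$ and $\lambda \in (0,1)$, and write $\bm{x}_\lambda := \lambda \bm{x}_{(1)} + (1-\lambda) \bm{x}_{(2)}$. The goal in both cases is to exhibit a feasible candidate at $\bm{x}_\lambda$ whose cost is at most $\lambda(\cdot)(\bm{x}_{(1)}) + (1-\lambda)(\cdot)(\bm{x}_{(2)})$.

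For $T_t v$, invoke Assumption~\ref{ass:sc} (compactness of $\mathcal{U}(\cdot)$ and lower semicontinuity, via \cite[Proposition D.5]{Hernandez2012}) to extract minimizers $\bm{u}_{(k)} \in \mathcal{U}(\bm{x}_{(k)})$ realizing $(T_t v)(\bm{x}_{(k)})$ for $k=1,2$. Set $\bm{u}_\lambda := \lambda \bm{u}_{(1)} + (1-\lambda) \bm{u}_{(2)}$. By Assumption~\ref{ass:lc}-2), $\bm{u}_\lambda \in \mathcal{U}(\bm{x}_\lambda)$, so it is feasible. Using affinity of $f$ in $(\bm{x},\bm{u})$ (Assumption~\ref{ass:lc}-1)) together with convexity of $v$, each term $v(f(\bm{x}_\lambda, \bm{u}_\lambda, \hat{\xi}^{[s]}))$ is bounded above by $\lambda v(f(\bm{x}_{(1)}, \bm{u}_{(1)}, \hat{\xi}^{[s]})) + (1-\lambda) v(f(\bm{x}_{(2)}, \bm{u}_{(2)}, \hat{\xi}^{[s]}))$; joint convexity of $r$ on $\mathcal{K}$ handles the stage cost. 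Summing over $s$ with weights $p_s \ge 0$ and taking the infimum yields $(T_t v)(\bm{x}_\lambda) \leq \lambda (T_t v)(\bm{x}_{(1)}) + (1-\lambda)(T_t v)(\bm{x}_{(2)})$.

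For $\hat{T}_t v$, the argument is analogous but one must also transport the auxiliary weights. Let $(\bm{u}_{(k)}, \gamma_{(k)}) \in \mathcal{U}(\bm{x}_{(k)}) \times \Delta^N$ be an optimal solution of \eqref{con_opt} at $\bm{x}_{(k)}$ (existence again from Assumption~\ref{ass:sc}). Define $\bm{u}_\lambda$ as above and $\gamma_\lambda := \lambda \gamma_{(1)} + (1-\lambda)\gamma_{(2)}$. Feasibility at $\bm{x}_\lambda$ must be checked coordinate by coordinate: $\bm{u}_\lambda \in \mathcal{U}(\bm{x}_\lambda)$ by Assumption~\ref{ass:lc}-2); $\gamma_\lambda \in \Delta^N$ because $\Delta^N$ is convex; and the equality constraint
\[
f(\bm{x}_\lambda, \bm{u}_\lambda, \hat{\xi}^{[s]}) = \sum_{i=1}^{M_{t+1}} (\gamma_\lambda)_{s,i}\, \bm{x}^{[i]} \quad \forall s \in \mathcal{S}
\]
follows by taking the convex combination of the two corresponding equalities and invoking affinity of $f$ in $(\bm{x},\bm{u})$. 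For the objective, the cost term $r(\bm{x}_\lambda, \bm{u}_\lambda)$ is bounded using joint convexity of $r$ on $\mathcal{K}$, while the term $\sum_{s,i} p_s (\gamma_\lambda)_{s,i}\, v(\bm{x}^{[i]})$ is linear (hence exactly convex combined) in $\gamma$. Feeding this feasible suboptimal pair into the infimum defining $(\hat{T}_t v)(\bm{x}_\lambda)$ gives the desired convexity.

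The only mildly subtle point is ensuring the equality constraint in \eqref{con_opt} survives convex combination, which is exactly where Assumption~\ref{ass:lc}-1) (affinity of $f$ jointly in $(\bm{x},\bm{u})$, not just in $\bm{u}$ for fixed $\bm{x}$) is essential; absent that, $f(\bm{x}_\lambda,\bm{u}_\lambda,\hat{\xi}^{[s]})$ would not generally equal the convex combination of $f(\bm{x}_{(k)},\bm{u}_{(k)},\hat{\xi}^{[s]})$ and feasibility at $\bm{x}_\lambda$ could fail. Everything else is a bookkeeping exercise in convex combinations, and no further approximation or limiting argument is required.
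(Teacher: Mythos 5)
Your proposal is correct and follows essentially the same route as the paper's own proof in Appendix~B: extract exact minimizers at the two base points (guaranteed by Assumption~\ref{ass:sc}), form convex combinations of the controls and, for $\hat{T}_t$, of the auxiliary weights, verify feasibility via Assumption~\ref{ass:lc} and the joint affinity of $f$, and bound the objective using joint convexity of $r$, convexity of $v\circ f$, and linearity in $\gamma$. Your closing remark correctly identifies the role of joint affinity of $f$ in $(\bm{x},\bm{u})$ in preserving the equality constraint, which is exactly the step the paper relies on as well.
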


Its proof can be found in~Appendix~\ref{app:conv}.
An immediate observation obtained from Lemma~\ref{lem:conv} is the convexity of~$v_t^\star$ and $\hat{v}_t$ for all $t = 0, \ldots, K$ because $v_K^\star$ and $\hat{v}_K$ are convex on $\mathcal{Z}_K$.

\begin{proposition}\label{prop:conv}
Under Assumptions~\ref{ass:sc} and \ref{ass:lc},
the functions $v_t^\star$ and $\hat{v}_t$ are convex for all $t =0, \ldots, K$. 
\end{proposition}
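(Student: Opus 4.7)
The plan is a straightforward backward induction on $t$, with Lemma~\ref{lem:conv} doing essentially all the work. The statement has two parallel assertions (convexity of $v_t^\star$ and convexity of $\hat{v}_t$), and I would handle them in a single induction since the induction steps are identical in form.

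For the base case $t = K$, both $v_K^\star$ and $\hat{v}_K$ coincide with the terminal cost $q$ restricted to $\mathcal{Z}_K$. By Assumption~\ref{ass:lc}-\emph{1)}, $q$ is convex on $\mathcal{X}$, and since convexity is preserved under restriction to a convex subset (and $\mathcal{Z}_K \subseteq \mathcal{X}$ is convex by construction in Section~\ref{sec:dis}), both functions are convex on $\mathcal{Z}_K$.

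For the inductive step, assume $v_{t+1}^\star$ and $\hat{v}_{t+1}$ are convex on $\mathcal{Z}_{t+1}$. By the defining Bellman recursions, $v_t^\star = T_t v_{t+1}^\star$ and $\hat{v}_t = \hat{T}_t \hat{v}_{t+1}$ on $\mathcal{Z}_t$. Applying Lemma~\ref{lem:conv} to $v_{t+1}^\star$ yields convexity of $T_t v_{t+1}^\star = v_t^\star$ on $\mathcal{Z}_t$, and applying the same lemma to $\hat{v}_{t+1}$ yields convexity of $\hat{T}_t \hat{v}_{t+1} = \hat{v}_t$ on $\mathcal{Z}_t$. This closes the induction.

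There is really no obstacle here beyond verifying that the inductive hypotheses exactly match the hypotheses of Lemma~\ref{lem:conv}; in particular, Lemma~\ref{lem:conv} requires only convexity of the input function $v$ on $\mathcal{Z}_{t+1}$ (together with Assumptions~\ref{ass:sc} and~\ref{ass:lc}, which are standing assumptions of the proposition), and it produces convexity of both $T_t v$ and $\hat{T}_t v$ simultaneously, so a single invocation per induction step suffices for both families. The entire proof should fit in a few lines.
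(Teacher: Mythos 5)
Your proof is correct and is exactly the argument the paper intends: the paper dispenses with Proposition~\ref{prop:conv} in one sentence, noting that it follows from Lemma~\ref{lem:conv} by backward induction from the convex terminal condition $v_K^\star = \hat{v}_K \equiv q$. One tiny quibble: your closing remark that ``a single invocation per induction step suffices for both families'' is slightly off, since the two recursions feed \emph{different} inputs ($v_{t+1}^\star$ versus $\hat{v}_{t+1}$) into the lemma and thus require two invocations per step --- which is in fact what the body of your proof correctly does.
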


By Proposition~\ref{prop:conv}, the error bounds and convergence results in Theorems~\ref{thm:bd1} and \ref{cor:bd_conv}   are valid under Assumptions \ref{ass:sc}--\ref{ass:lc}.
Note, however, that Assumption~\ref{ass:lc} is merely a \emph{sufficient} condition for convexity.

\begin{algorithm}[tp]
Initialize Initialize $\hat{v}_K \equiv q$ on $\mathcal{Z}_K$;\\
\For{$t = K-1: -1: 0$}{
Set $\hat{v}_{t}(\bm{x}^{[i]}) := (\hat{T}_t \hat{v}_{t+1})(\bm{x}^{[i]})$  by solving the problem in \eqref{con_opt} with $v \equiv \hat{v}_{t+1}$ for each $i=1, \ldots, M_{t}$;\\
Set $\hat{\pi}_t (\bm{x}^{[i]})$ as an optimal $\bm{u}$ of the problem in \eqref{con_opt} with $v \equiv \hat{v}_{t+1}$ for each $i=1, \ldots, M_{t}$;
}
\caption{Interpolation-Free DP algorithm
}
\label{algorithm:dp}
\end{algorithm}

Algorithm~\ref{algorithm:dp}, which is based on the proposed method, can be used to evaluate the approximate value function $\hat{v}_t$ and to construct the corresponding policy $\hat{\pi}_t$ at all grid point $\bm{x}^{[i]}$'s in $\mathcal{Z}_t$.  
Given $\hat{v}_t (\bm{x}^{[i]})$ for all $i=1, \ldots, M_t$, 
the value of $\hat{v}_t$ at the other points in $\mathcal{Z}_t$, the approximate value function can be computed using \eqref{con_opt} with $v := \hat{v}_{t+1}$.
We also observe that the optimization problem in \eqref{con_opt} used to evaluate $(\hat{T}_t v)(\bm{x})$ is convex regardless of the convexity of $v$ under Assumption~\ref{ass:lc}.
Thus, in each iteration of the proposed DP algorithm, it suffices to solve $M_{t}$ convex optimization problems, each of which is for $\bm{x} := \bm{x}^{[i]}$.

\begin{proposition}\label{prop:cp}
Under Assumption~\ref{ass:lc}, the optimization problem in \eqref{con_opt} for any fixed $\bm{x} \in \mathcal{Z}_{t}$ is a convex optimization problem if $v\in \mathbb{B}_b(\mathcal{Z}_{t+1})$.
\end{proposition}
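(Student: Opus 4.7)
The plan is to fix an arbitrary $\bm{x} \in \mathcal{Z}_t$ and verify the two defining properties of a convex optimization problem for the decision variables $(\bm{u}, \gamma) \in \mathbb{R}^m \times \mathbb{R}^{N M_{t+1}}$: the objective is a convex function, and the feasible set is a convex subset of the decision space. The heart of the argument is simply to trace where each piece of Assumption~\ref{ass:lc} is used when the dependence on $\bm{x}$ is frozen.

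First I would handle the objective $r(\bm{x}, \bm{u}) + \sum_{s=1}^N \sum_{i=1}^{M_{t+1}} p_s \gamma_{s,i} v(\bm{x}^{[i]})$. The second term is linear in $\gamma$ (with coefficients $p_s v(\bm{x}^{[i]})$, which are finite since $v \in \mathbb{B}_b(\mathcal{Z}_{t+1})$), and does not depend on $\bm{u}$. For the first term, Assumption~\ref{ass:lc}-\emph{1)} says $r$ is convex on $\mathcal{K}$, so $\bm{u} \mapsto r(\bm{x}, \bm{u})$ is a convex function on the slice $\{ \bm{u} : (\bm{x}, \bm{u}) \in \mathcal{K}\}$. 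A sum of a convex function and an affine function is convex in $(\bm{u}, \gamma)$.

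Next I would verify convexity of the feasible set, treating each constraint in turn. The equality $f(\bm{x}, \bm{u}, \hat{\xi}^{[s]}) = \sum_{i=1}^{M_{t+1}} \gamma_{s,i} \bm{x}^{[i]}$ is affine in $(\bm{u}, \gamma)$ because, by Assumption~\ref{ass:lc}-\emph{1)}, the mapping $(\bm{x}, \bm{u}) \mapsto f(\bm{x}, \bm{u}, \hat{\xi}^{[s]})$ is affine, hence so is its restriction to fixed $\bm{x}$. The simplex constraint $\gamma_s \in \Delta$ defines a standard convex polytope. The remaining set $\mathcal{U}(\bm{x})$ (or, under the refinement of the footnote, the intersection of sublevel sets of the convex functions $\bm{u} \mapsto a_k(\bm{x}, \bm{u})$) is convex; to see this from Assumption~\ref{ass:lc}-\emph{2)} alone, I would specialize that hypothesis by taking $\bm{x}_{(1)} = \bm{x}_{(2)} = \bm{x}$, which yields $\lambda \bm{u}_{(1)} + (1-\lambda)\bm{u}_{(2)} \in \mathcal{U}(\bm{x})$ whenever $\bm{u}_{(1)}, \bm{u}_{(2)} \in \mathcal{U}(\bm{x})$. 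The feasible set is thus the intersection of a convex set with an affine subspace, hence convex.

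There is no real obstacle here; the result is essentially a direct unpacking of Assumption~\ref{ass:lc}. The only point that deserves emphasis is the extraction of convexity of the slice $\mathcal{U}(\bm{x})$ from the joint condition in Assumption~\ref{ass:lc}-\emph{2)}, which is what allows the constraint $\bm{u} \in \mathcal{U}(\bm{x})$ to enter a convex program at each fixed $\bm{x}$. Combining the two observations yields the proposition.
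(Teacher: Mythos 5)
Your proposal is correct and follows essentially the same route as the paper's proof: convexity of the objective (convex in $\bm{u}$, linear in $\gamma$), affineness of the equality constraints via Assumption~\ref{ass:lc}-\emph{1)}, convexity of the simplex, and convexity of $\mathcal{U}(\bm{x})$ from Assumption~\ref{ass:lc}-\emph{2)}. The only difference is that you spell out the specialization $\bm{x}_{(1)}=\bm{x}_{(2)}=\bm{x}$ to get convexity of the slice $\mathcal{U}(\bm{x})$, a step the paper states without elaboration.
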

\begin{proof}
Fix an arbitrary state $\bm{x} \in \mathcal{Z}_t$.
The objective function is convex in $\bm{u}$ and linear in $\gamma$ (even when $v$ is nonconvex).
Furthermore, the equality constraints are linear in $(\bm{u}, \gamma)$.
Assumption~\ref{ass:lc} implies that $\mathcal{U}(\bm{x})$ is convex. 
Also, the probability simplex $\Delta$ is convex.
Therefore, this optimization problem for any fixed $\bm{x}$ is convex. \qed
\end{proof}

\begin{remark}[Interpolation-free property]\label{rem:if}
Note that we can evaluate $\hat{v}_t$ at an arbitrary $\bm{x} \in \mathcal{Z}_t$ by using the definition~\eqref{con_opt} of $\hat{T}_t$ without any explicit interpolation that may introduce additional numerical errors. 
This feature is also useful  when the output of $\hat{\pi}_t$ needs to be specified at a particular state that is different from the grid points $\{\bm{x}^{[i]}\}_{i=1}^{M_t}$.
Unlike many existing discretization-based methods, 
our approach does not require a separate interpolation stage in constructing both the optimal value function and control policies. 
\end{remark}

The computational complexity of Algorithm~\ref{algorithm:dp} increases exponentially with the state space dimension. This complexity issue, called the \emph{curse of dimensionality}, is inherited from dynamic programming. 
Several approximate dynamic programming (ADP) and RL methods have been proposed to alleviate the dimensionality issue using approximation in value spaces~(e.g., \cite{Mnih2015}), policy spaces~(e.g., \cite{Schulman2015}) or both~(e.g., \cite{Lilicrap2015, Haarnoja2018}).
In ADP and RL methods, it is important to find a reasonable balance between scalable implementation and adequate performance.
Our focus is to pursue the latter with a uniform convergence guarantee.
Nevertheless, an important future research is to improve the scalability of  the proposed convex optimization approach by replacing a grid with a function approximator.

\section{Nonconvex Value Functions}\label{sec:nonconv}

The convexity of the optimal value function plays a critical role in obtaining the convergence results in the previous section. 
To relax the convexity condition (e.g., Assumption~\ref{ass:lc}), we  first show that the proposed approximation method is useful when constructing a suboptimal policy with a provable error bound in the case of nonlinear control-affine systems with convex cost functions.
For further general cases, we propose a modified approximation method  based on a nonconvex bi-level optimization problem, where the inner problem is a linear program.

\subsection{\bf Control-Affine Systems}\label{sec:ca}

Consider a control-affine system of the form
\begin{equation}\label{sys_ca}
x_{t+1} = f(x_t, u_t, \xi_t) := g(x_t, \xi_t) + h(x_t, \xi_t)u_t.
\end{equation}
More precisely, we assume the following:
\begin{assumption}\label{ass:ca}
The function $f:\mathcal{X} \times \mathcal{U} \times \Xi \to \mathcal{X}$ can be expressed as \eqref{sys_ca}, where 
 $g:\mathcal{X}\times \Xi \to \mathbb{R}^n$ and $h: \mathcal{X} \times \Xi \to \mathbb{R}^{n\times m}$ are (possibly nonlinear) measurable functions such that $g(\cdot, \bm{\xi})$ and $h(\cdot, \bm{\xi})$ are continuous for each\\ $\bm{\xi} \in \Xi$.
In addition, $\bm{u} \mapsto r(\bm{x}, \bm{u})$ is convex on $\mathcal{U}(\bm{x})$ for each $\bm{x} \in \mathcal{X}$. 
\end{assumption}

Note that the condition on $r$ imposed by this assumption is weaker than Assumption \ref{ass:lc} which requires the joint convexity of $r$.
In this setting, each iteration of the DP algorithm in Section~\ref{sec:vi} still involves $M_{t}$ convex optimization problems. 

\begin{proposition} \label{prop:ca_conv}
Under Assumption~\ref{ass:ca}, the optimization problem in \eqref{con_opt} is a convex program.
\end{proposition}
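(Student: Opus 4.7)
The plan is to mirror the proof of Proposition~\ref{prop:cp}, but to notice that the only place where Assumption~\ref{ass:lc} was used substantively in that argument, beyond the convexity of $\mathcal{U}(\bm{x})$, was to guarantee convexity of the objective and affineness of the equality constraints in the joint variable $(\bm{u}, \gamma)$ for a \emph{fixed} $\bm{x}$. Under Assumption~\ref{ass:ca}, both of these properties still hold, since we are freezing $\bm{x}$ and only need convexity in $\bm{u}$ (not joint convexity in $(\bm{x}, \bm{u})$) and affineness of $\bm{u} \mapsto f(\bm{x}, \bm{u}, \hat{\xi}^{[s]})$ (not joint affineness).

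First, I would fix an arbitrary $\bm{x} \in \mathcal{Z}_t$ and inspect the objective $r(\bm{x}, \bm{u}) + \sum_{s=1}^N \sum_{i=1}^{M_{t+1}} p_s \gamma_{s,i} v(\bm{x}^{[i]})$. The first term is convex in $\bm{u}$ by the hypothesis on $r$ in Assumption~\ref{ass:ca}, and it does not depend on $\gamma$, so it is convex as a function of $(\bm{u}, \gamma)$. The second term is linear in $\gamma$ and does not depend on $\bm{u}$, hence convex in $(\bm{u}, \gamma)$. Their sum is therefore convex.

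Next, I would examine the equality constraints. Here the control-affine form \eqref{sys_ca} is decisive: for every $\hat{\xi}^{[s]} \in \Xi$ and the fixed $\bm{x}$,
\[
f(\bm{x}, \bm{u}, \hat{\xi}^{[s]}) = g(\bm{x}, \hat{\xi}^{[s]}) + h(\bm{x}, \hat{\xi}^{[s]}) \bm{u}
\]
is \emph{affine in $\bm{u}$}, even though $g(\cdot, \hat{\xi}^{[s]})$ and $h(\cdot, \hat{\xi}^{[s]})$ may be nonlinear — we do not need affineness jointly in $(\bm{x}, \bm{u})$ because $\bm{x}$ has been fixed. The right-hand side $\sum_{i=1}^{M_{t+1}} \gamma_{s, i} \bm{x}^{[i]}$ is linear in $\gamma_s$. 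Hence each equality constraint is affine in $(\bm{u}, \gamma)$.

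Finally, for the feasible set I would note that $\mathcal{U}(\bm{x})$ is convex (this is a standard structural assumption; under the footnote's representation via convex inequalities $a_k(\bm{x}, \bm{u}) \le b_k(\bm{x})$, convexity in $\bm{u}$ of each $a_k(\bm{x}, \cdot)$ suffices, and this does not require the joint convexity of Assumption~\ref{ass:lc}) and the probability simplex $\Delta$ is convex. A convex objective over a set defined by affine equalities and convex constraints yields a convex program, which is the claim. The only mildly delicate point — the main thing to watch — is keeping clear that convexity of $\bm{u} \mapsto r(\bm{x}, \bm{u})$ and affineness of $\bm{u} \mapsto f(\bm{x}, \bm{u}, \hat{\xi}^{[s]})$ at a fixed $\bm{x}$ are strictly weaker than the joint assumptions of Assumption~\ref{ass:lc}, so the argument of Proposition~\ref{prop:cp} goes through verbatim under the weaker Assumption~\ref{ass:ca}.
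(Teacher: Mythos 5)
Your proof is correct and takes the same route the paper intends: the paper states Proposition~\ref{prop:ca_conv} without an explicit proof, leaving it as an immediate adaptation of the proof of Proposition~\ref{prop:cp}, and your argument supplies exactly that adaptation. You correctly identify the key point---that for a fixed $\bm{x}$ only convexity of $\bm{u} \mapsto r(\bm{x},\bm{u})$ and affineness of $\bm{u} \mapsto g(\bm{x},\hat{\xi}^{[s]}) + h(\bm{x},\hat{\xi}^{[s]})\bm{u}$ are needed, both of which Assumption~\ref{ass:ca} provides---so nothing is missing.
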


Due to the nonconvexity of the optimal value function, $\hat{v}_t$ obtained by
value iteration in the previous section  is no longer guaranteed to converge to the optimal value function as $\delta$ tends to zero. 
However, we are still able to characterize an error bound for the approximate policy $\hat{\pi}$ as follows:

\begin{proposition}[Error Bound]\label{thm:bound}
Suppose that Assumptions~\ref{ass:sc} and \ref{ass:ca} hold, and that $v^\star_t$ is  locally Lipschitz continuous on $\mathcal{Z}_t$ for each $t$. Then,  for all $t =0, \ldots, K$, we have
\begin{equation}\nonumber
\begin{split}
0 &\leq {v}^{\hat{\pi}}_t (\bm{x}) - v^\star_t (\bm{x}) \leq  v_t^{\hat{\pi}} (\bm{x}) - \hat{v}_t(\bm{x}) + \sum_{k=t+1}^K L_k \delta \quad \forall \bm{x} \in \mathcal{Z}_t,
\end{split}
\end{equation}
where
$L_k :=  \sup_{j = 1,\ldots, N_{\mathcal{C},k}} \sup_{\bm{x}, \bm{x}' \in \mathcal{C}_j: \bm{x} \neq \bm{x}'} \frac{\| v_k^\star(\bm{x}) - v_k^\star(\bm{x}')  \|}{\| \bm{x} - \bm{x}'  \|}$.
\end{proposition}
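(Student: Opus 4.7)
The plan is to prove the proposition in two steps. The lower bound $v^{\hat{\pi}}_t(\bm{x}) \geq v^\star_t(\bm{x})$ is immediate from the characterization of $v^\star_t$ as the infimum of cost-to-go functions over admissible policies under Assumption~\ref{ass:sc}. All the substantive work is in the upper bound, which after rearrangement is equivalent to proving the pointwise estimate $\hat{v}_t(\bm{x}) - v^\star_t(\bm{x}) \leq \sum_{k=t+1}^K L_k \delta$ on $\mathcal{Z}_t$ for every~$t$. Once this is in hand, writing $v^{\hat{\pi}}_t - v^\star_t = (v^{\hat{\pi}}_t - \hat{v}_t) + (\hat{v}_t - v^\star_t)$ and plugging in the estimate yields the claimed inequality.

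The key observation I would make first is that the lower bound direction of Proposition~\ref{prop:bd}, namely $(\hat{T}_t v)(\bm{x}) - L_v \delta \leq (T_t v)(\bm{x})$, does \emph{not} actually use the convexity of $v$ anywhere in its proof. Tracing through the argument, convexity was only invoked for the upper bound $(T_t v)(\bm{x}) \leq (\hat{T}_t v)(\bm{x})$ via Jensen's inequality; the reverse bound only used the triangle-inequality estimate $\|\bm{x}^{[i']} - y_s\| \leq \sum_i \gamma^\star_{s,i} \|\bm{x}^{[i']} - \bm{x}^{[i]}\| \leq \delta$ together with the ``max dominates convex combination'' step and the definition of $L_v$ as a cellwise Lipschitz constant. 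Therefore, under the present assumption that $v^\star_{t+1}$ is locally Lipschitz continuous on the compact set $\mathcal{Z}_{t+1}$ (which makes $L_{t+1}$ finite because each cell $\mathcal{C}_j$ is a compact subset), we still get
\[
(\hat{T}_t v^\star_{t+1})(\bm{x}) \leq (T_t v^\star_{t+1})(\bm{x}) + L_{t+1}\delta = v^\star_t(\bm{x}) + L_{t+1}\delta \quad \forall \bm{x} \in \mathcal{Z}_t,
\]
with no convexity required.

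With this one-sided version of Proposition~\ref{prop:bd} in hand, the upper bound on $\hat{v}_t - v^\star_t$ follows by backward induction on $t$. The base case $t=K$ is trivial since $\hat{v}_K \equiv q \equiv v^\star_K$ on $\mathcal{Z}_K$. For the inductive step, assuming $\hat{v}_{t+1} \leq v^\star_{t+1} + \sum_{k=t+2}^K L_k \delta$ on $\mathcal{Z}_{t+1}$, I would apply the operator $\hat{T}_t$ to both sides, using two properties: the monotonicity of $\hat{T}_t$ noted right after \eqref{con_opt}, and the shift invariance $\hat{T}_t(v+c) = \hat{T}_t v + c$ for any constant $c$, which falls out immediately from $\sum_{i=1}^{M_{t+1}} \gamma_{s,i} = 1$ and $\sum_{s=1}^N p_s = 1$. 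This gives $\hat{v}_t = \hat{T}_t \hat{v}_{t+1} \leq \hat{T}_t v^\star_{t+1} + \sum_{k=t+2}^K L_k \delta$, and combining with the one-sided bound above absorbs the extra $L_{t+1}\delta$ and closes the induction.

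The main obstacle I anticipate is convincingly extracting the convexity-free lower bound of Proposition~\ref{prop:bd}, because the original proof packaged both inequalities together; in a clean write-up I would either state this as a preparatory lemma isolating the Lipschitz-only argument, or include a brief remark pointing out which portion of the earlier proof transfers verbatim. The remaining bookkeeping (well-definedness of $L_k$, the shift invariance of $\hat{T}_t$, and the telescoping of the $\delta$ terms) is routine once the key reduction is in place.
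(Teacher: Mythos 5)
Your proposal is correct and follows essentially the same route as the paper: the lower bound from optimality of $v_t^\star$, the observation that the $(\hat{T}_t v)(\bm{x}) - L_v\delta \leq (T_t v)(\bm{x})$ half of Proposition~\ref{prop:bd} never uses convexity (only cellwise Lipschitz continuity), and the backward induction of Theorem~\ref{thm:bd1} restricted to the upper inequality, finishing with the decomposition $v_t^{\hat{\pi}} - v_t^\star = (v_t^{\hat{\pi}} - \hat{v}_t) + (\hat{v}_t - v_t^\star)$. The only difference is presentational: the paper invokes the earlier induction by reference, whereas you rerun it explicitly and make the monotonicity-plus-constant-shift step of $\hat{T}_t$ explicit, which the paper leaves implicit.
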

\begin{proof}
Fix an arbitrary $t \in \{0, \ldots, K\}$.
By the optimality of $v_t^\star$, we have
\[
v_t^\star(\bm{x}) \leq v_t^{\hat{\pi}} (\bm{x}) \quad \forall \bm{x} \in \mathcal{Z}_t .
\]
Note that in the second part of the proof of Proposition~\ref{prop:bd}, the convexity of $v$ is unused. It only requires the continuity of $v$ on $\mathcal{Z}_t$. 
Therefore, the second inequality of \eqref{fin_bd} in Theorem \ref{thm:bd1} holds when $v_t^\star$ is continuous, i.e.,
\[
\hat{v}_t(\bm{x}) - v_t^\star (\bm{x}) \leq \sum_{k = t+1}^K L_k \delta   \quad \forall \bm{x} \in \mathcal{Z}_t.
\]
This implies that
\[
v_t^{\hat{\pi}}(\bm{x}) - v_t^\star(\bm{x}) \leq v_t^{\hat{\pi}}(\bm{x}) - \hat{v}_t(\bm{x}) + \sum_{k=t+1}^K L_k \delta \quad \forall \bm{x} \in \mathcal{Z}_t,
\]
and the result follows. \qed
\end{proof}

This proposition implies that the performance of $\hat{\pi}$ converges to the optimum as $\delta$ tends to zero  if its cost-to-go function $v^{\hat{\pi}}_t$ converges to the approximate value function $\hat{v}_t$ for all $t$. 
Otherwise, 
it is possible that $\hat{v}_t^\pi$ converges to some function which is different from the optimal value function. Or it may oscillate within the error bound. 
However, this {\it a posteriori} error bound is useful when we need to design a controller with a provable performance guarantee, for example, in safety-critical systems where the objective is to maximize the probability of safety (e.g.,~\cite{Abate2008}), as this  problem is subject to nonconvexity issues~\cite{Yang2018}.

\subsection{\bf General Case}\label{sec:gen}

In the case of general nonlinear systems with nonlinear stage-wise cost functions, 
we modify the operator $\hat{T}_t$ as follows.
For any $v \in \mathbb{B}_b(\mathcal{Z}_{t+1})$, let
\begin{equation}\label{nopt}
\begin{split}
(\tilde{T}_t v ) (\bm{x}) := \inf_{\bm{u} \in \mathcal{U}(\bm{x})}  \; &\big [ r(\bm{x}, \bm{u}) + (H_t  v)(\bm{x}, \bm{u}) \big ] \;\; \forall \bm{x} \in \mathcal{Z}_t, 
\end{split}
\end{equation}
where 
\begin{equation}\label{copt}
\begin{split}
(H_t v ) (\bm{x}, \bm{u}) :=
\inf_{\gamma}  \;\; &  \sum_{s=1}^N \sum_{i \in \mathcal{N} (y_s)} p_s \gamma_{s,i} v (\bm{x}^{[i]}) \\
\mbox{s.t.} \;\; &  y_s = \sum_{i \in \mathcal{N} (y_s)} \gamma_{s, i} \bm{x}^{[i]}  \quad \forall s  \in \mathcal{S}\\ 
&\gamma_{s}  \in \Delta  \quad   \forall s \in \mathcal{S}\\
&\gamma_{s,i} = 0 \quad \forall i \notin \mathcal{N} (y_s)\quad \forall s \in \mathcal{S}
\end{split}
\end{equation}
for each $(\bm{x}, \bm{u}) \in \mathcal{Z}_t \times \mathcal{U}$ such that $\bm{u} \in \mathcal{U}(\bm{x})$, and $y_s=f(\bm{x}, \bm{u}, \hat{\xi}^{[s]})$ for all $s  \in \mathcal{S}$. Here
 $\mathcal{N} (y_s)$ denotes the set of grid points on the cell $\mathcal{C}_j$ that contains $y_s$. 
The inner optimization problem~\eqref{copt} for $H_tv$ is a linear program for any fixed $(\bm{x}, \bm{u})$. However, the outer problem~\eqref{nopt} is nonconvex in general.
The nonconvexity originates from the local representation of $y_s = f(\bm{x}, \bm{u}, \hat{\xi}^{[s]})$ by only using the grid points in the cell that contains $y_s$.
However, such a local representation reduces the problem size and 
 allows us to show that $\tilde{T}_t v$ converges uniformly to $T_t  v$ on $\mathcal{Z}_t$ as $\delta$ tends to zero if $v$ is locally Lipschitz continuous on $\mathcal{Z}_{t+1}$.

\begin{proposition}\label{prop:op}
Suppose that Assumptions~\ref{ass:sc} holds, and  that  $v \in \mathbb{B}_b (\mathcal{Z}_{t+1})$ is  locally Lipschitz continuous on $\mathcal{Z}_{t+1}$.
 Then, we have
\[
|(\tilde{T}_t v)(\bm{x}) - (T_t v)(\bm{x})  | \leq   L_v \delta \quad \forall \bm{x} \in \mathcal{Z}_t,
\]
where $L_v := \max_{j = 1,\ldots, N_{\mathcal{C}, t+1}} \sup_{\bm{x}, \bm{x}' \in \mathcal{C}_j: \bm{x} \neq \bm{x}'} \frac{\| v (\bm{x}) - v (\bm{x}')  \|}{\| \bm{x} - \bm{x}'  \|}$, and therefore $\tilde{T}_t v$ converges uniformly to $T_t v$ on $\mathcal{Z}_t$ as $\delta \to 0$.
\end{proposition}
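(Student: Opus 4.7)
The plan is to prove the pointwise bound cellwise in the sample index and then pass to the infimum over $\bm{u}$. Fix $\bm{x} \in \mathcal{Z}_t$ and any $\bm{u} \in \mathcal{U}(\bm{x})$, and set $y_s := f(\bm{x}, \bm{u}, \hat{\xi}^{[s]})$. By the inclusion property~\eqref{inclusion}, each $y_s$ lies in $\mathcal{Z}_{t+1}$, hence in some cell $\mathcal{C}_{j(s)}$ whose vertices are exactly $\{\bm{x}^{[i]} : i \in \mathcal{N}(y_s)\}$. Since $y_s$ belongs to the convex hull of these vertices, the linear program~\eqref{copt} is feasible and attains its optimum.

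The core estimate is the following: for \emph{any} feasible $\gamma$ of~\eqref{copt}, both $y_s$ and every node $\bm{x}^{[i]}$ with $i \in \mathcal{N}(y_s)$ lie in the single cell $\mathcal{C}_{j(s)}$, so the definition of $L_v$ gives $|v(\bm{x}^{[i]}) - v(y_s)| \leq L_v \|\bm{x}^{[i]} - y_s\| \leq L_v \delta$. Using $\sum_{i \in \mathcal{N}(y_s)} \gamma_{s,i} = 1$ together with the triangle inequality yields
\[
\biggl| \sum_{i \in \mathcal{N}(y_s)} \gamma_{s,i} v(\bm{x}^{[i]}) - v(y_s) \biggr| \le L_v \delta,
\]
and averaging over $s$ with $\sum_s p_s = 1$ shows that every feasible objective value of~\eqref{copt} lies within $L_v \delta$ of $\sum_{s=1}^N p_s v(y_s)$. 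Taking the infimum over $\gamma$ therefore gives
\[
\biggl| (H_t v)(\bm{x}, \bm{u}) - \sum_{s=1}^N p_s v(y_s) \biggr| \le L_v \delta
\]
for every admissible $(\bm{x}, \bm{u})$.

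To finish, I add the common term $r(\bm{x}, \bm{u})$ and take $\inf_{\bm{u} \in \mathcal{U}(\bm{x})}$ on both sides. Because a uniform pointwise bound $|A(\bm{u}) - B(\bm{u})| \leq \varepsilon$ on a common domain implies $|\inf A - \inf B| \leq \varepsilon$, I conclude $|(\tilde{T}_t v)(\bm{x}) - (T_t v)(\bm{x})| \leq L_v \delta$ for all $\bm{x} \in \mathcal{Z}_t$. For uniform convergence as $\delta \to 0$, local Lipschitz continuity of $v$ on the compact set $\mathcal{Z}_{t+1}$ promotes it to a global Lipschitz constant there, which dominates $L_v$ uniformly in the choice of grid, so $L_v \delta \to 0$.

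I do not expect any serious obstacle; the only things to verify carefully are the feasibility of the inner LP (which relies on $y_s \in \mathcal{Z}_{t+1}$ from~\eqref{inclusion} together with property~1 of the cell decomposition in Section~\ref{sec:dis}) and the fact that $L_v$ is actually finite under the assumption of local Lipschitz continuity on the compact set $\mathcal{Z}_{t+1}$. The rest is a direct cell-wise Lipschitz/convex-combination argument, essentially a localized version of the one used in the proof of Proposition~\ref{prop:bd}, which now produces a two-sided bound because no convexity of $v$ is invoked.
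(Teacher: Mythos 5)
Your proof is correct, and the core estimate is the same one the paper uses: within the single cell containing $y_s$, the local Lipschitz quotient $L_v$ together with $\sum_{i\in\mathcal{N}(y_s)}\gamma_{s,i}=1$ gives $\bigl|\sum_i \gamma_{s,i}v(\bm{x}^{[i]})-v(y_s)\bigr|\le L_v\delta$. Where you differ is in how that estimate is turned into a bound on the operators. The paper argues asymmetrically: it evaluates at the optimizer $\tilde{\bm{u}}$ of $\tilde{T}_t$ (with its optimal $\tilde\gamma$) to get $(\tilde{T}_tv)(\bm{x})\ge (T_tv)(\bm{x})-L_v\delta$, and then delegates the reverse inequality to the second half of the proof of Proposition~\ref{prop:bd}, which constructs a particular feasible $\gamma^\star$ from the optimizer $\bm{u}^\star$ of $T_t$. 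You instead prove the stronger intermediate statement that \emph{every} feasible point of~\eqref{copt} has objective within $L_v\delta$ of $\sum_s p_s v(y_s)$, hence $|(H_tv)(\bm{x},\bm{u})-\sum_s p_s v(y_s)|\le L_v\delta$ uniformly over admissible $(\bm{x},\bm{u})$, and then invoke the elementary fact that uniformly close objectives have infima within the same tolerance. This buys a single symmetric argument that yields both directions at once and makes explicit a reusable uniform bound on $H_tv$; the paper's version is slightly more economical on the page only because it can point back to Proposition~\ref{prop:bd}. The two points you flag as needing care — feasibility of the inner LP (from~\eqref{inclusion} and the cells being convex hulls of their grid points) and finiteness of $L_v$ (local Lipschitz continuity on the compact convex set $\mathcal{Z}_{t+1}$ upgrades to a global Lipschitz constant dominating $L_v$ for every grid) — are exactly the right ones, and both hold.
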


\begin{proof}
Fix an arbitrary $\bm{x} \in \mathcal{Z}_t$.
Under Assumption~\ref{ass:sc}, both \eqref{nopt} and \eqref{copt} have optimal solutions.
Let $\tilde{\bm{u}}$ be an optimal solution of \eqref{nopt}, and let $\tilde{\gamma}$ be a corresponding optimal solution of~\eqref{copt} given $\bm{u} := \tilde{\bm{u}}$.
We also let 
$\tilde{y}_s := f(\bm{x}, \tilde{\bm{u}}, \hat{\xi}^{[s]})$ for each $s$.
Then, we have
\[
\tilde{y}_s = \sum_{i \in \mathcal{N}(\tilde{y}_s)} \tilde{\gamma}_{s,i} \bm{x}^{[i]}.
\]
By the continuity of $v$ on the compact set $\mathcal{Z}_{t+1}$, we have
\[
|  v ( \tilde{y}_s) -  v(\bm{x}^{[i]}) | 
\leq  L_v \| \tilde{y}_s - \bm{x}^{[i]} \| \leq  L_v\delta \quad  \forall i \in \mathcal{N}(\tilde{y}_s).
\]
Since $\sum_{i \in \mathcal{N}(\tilde{y}_s)} \tilde{\gamma}_{s,i} = 1$, we have
\begin{equation}\nonumber
\begin{split}
\bigg |v (\tilde{y}_s) - \sum_{i \in \mathcal{N}(y_s)} \tilde{\gamma}_{s,i} v(\bm{x}^{[i]}) \bigg  | 
&\leq \sum_{i \in \mathcal{N}(\tilde{y}_s)} | \tilde{\gamma}_{s,i} v ( \tilde{y}_s) - \tilde{\gamma}_{s,i} v(\bm{x}^{[i]}) |  \\
& \leq \sum_{i \in \mathcal{N}(\tilde{y}_s)}  \tilde{\gamma}_{s,i} L_v \delta = L_v \delta
\end{split}
\end{equation}
for each $s$.
By using this bound, we obtain that
\begin{equation} \nonumber
\begin{split}
(\tilde{T}_t v) (\bm{x})   &=  r(\bm{x}, \tilde{\bm{u}}) +   \sum_{s=1}^N  \sum_{i \in \mathcal{N}(\tilde{y}_s)} p_s  \tilde{\gamma}_{s,i} v (\bm{x}^{[i]})\\
&\geq r(\bm{x}, \tilde{\bm{u}}) +  \sum_{s=1}^N  p_s v(\tilde{y}_s) -  L_v \delta\\
&= r(\bm{x}, \tilde{\bm{u}}) +  \sum_{s=1}^N p_s v(f(\bm{x}, \tilde{\bm{u}}, \hat{\xi}^{[s]})) -  L_v \delta\\
&\geq (T_t v)(\bm{x}) -  L_v \delta,
\end{split}
\end{equation}
where the last inequality holds because $\tilde{\bm{u}}$ is a feasible solution to the minimization problem in the definition~\eqref{o_opt} of $T_t$.

The other inequality, $(\tilde{T}_t v)(\bm{x}) -   L_v \delta \leq (T_t v)(\bm{x})$, can be shown  by using the second part of the proof of Proposition~\ref{prop:bd}.\footnote{Note that the convexity of $v$ is unused in the second part of the proof of Proposition \ref{prop:bd}. Thus, it is valid in the nonconvex case.}
Since $\bm{x}$ was arbitrarily chosen in $\mathcal{Z}_t$, the result follows. \qed
\end{proof}

Let $\tilde{v}_t: \mathcal{Z}_t \to \mathbb{R}$, $t = 0, \ldots, K$, be defined by
\[
\tilde{v}_t (\bm{x})= (\tilde{T}_t\tilde{v}_{t+1}) (\bm{x}) \quad  \forall \bm{x} \in \mathcal{Z}_t \; \forall t = 0, \ldots, K-1
\]
with $\tilde{v}_K \equiv q$ on $\mathcal{Z}_K$.
By using Proposition \ref{prop:op} and the inductive argument in the proof of Theorem~\ref{thm:bd1},
we can show that $\tilde{v}_t$ converges uniformly to the optimal value function on $\mathcal{Z}_t$ as $\delta$ tends to zero.

\begin{theorem}
Suppose that Assumption \ref{ass:sc} holds, and that $v_t^\star$  is locally Lipschitz continuous on $\mathcal{Z}_{t}$ for each $t =0, \ldots, K$.
Then, we have
\begin{equation}
 |v_t^\star (\bm{x})  - \tilde{v}_t (\bm{x}) | \leq \sum_{k = t+1}^K L_k \delta \quad \forall \bm{x} \in \mathcal{Z}_t \; \forall t =0, \ldots, K,
\end{equation}
where $L_k :=  \sup_{j = 1,\ldots, N_{\mathcal{C},k}} \sup_{\bm{x}, \bm{x}' \in \mathcal{C}_j: \bm{x} \neq \bm{x}'} \frac{\| v_k^\star(\bm{x}) - v_k^\star(\bm{x}')  \|}{\| \bm{x} - \bm{x}'  \|}$,
and therefore
$\tilde{v}_t$ converges uniformly to $v_t^\star$ on $\mathcal{Z}_t$ as $\delta \to 0$.
\end{theorem}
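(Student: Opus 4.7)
\medskip

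\noindent\textbf{Proof proposal.} The plan is to mimic the backward induction used in the proof of Theorem~\ref{thm:bd1}, but with Proposition~\ref{prop:op} playing the role of Proposition~\ref{prop:bd}. The crucial structural ingredients that will make the induction go through are (i) the monotonicity of the modified operator $\tilde{T}_t$, and (ii) the compatibility of $\tilde{T}_t$ with constant shifts, i.e., $\tilde{T}_t(v+c) = \tilde{T}_t v + c$ for any $c\in\mathbb{R}$. Both should be immediate from the definitions~\eqref{nopt}--\eqref{copt}: any feasible $\gamma$ satisfies $\sum_{i\in\mathcal{N}(y_s)}\gamma_{s,i}=1$ and $\gamma_{s,i}\geq 0$, so replacing $v$ by $v+c$ or by a pointwise larger function simply increases the linear objective correspondingly, for every feasible choice of $(\bm{u},\gamma)$.

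First I would set up the induction on $t$ running backward from $t=K$. The base case is trivial since $\tilde{v}_K \equiv q \equiv v_K^\star$ on $\mathcal{Z}_K$. For the inductive step, suppose $|v_{t+1}^\star(\bm{x}) - \tilde{v}_{t+1}(\bm{x})| \leq \sum_{k=t+2}^K L_k \delta$ for all $\bm{x}\in\mathcal{Z}_{t+1}$. Write $C := \sum_{k=t+2}^K L_k\delta$, so that $v_{t+1}^\star - C \leq \tilde{v}_{t+1} \leq v_{t+1}^\star + C$ pointwise on $\mathcal{Z}_{t+1}$. Applying $\tilde{T}_t$ to all three functions and using monotonicity together with the constant-shift property yields
\[
(\tilde{T}_t v_{t+1}^\star)(\bm{x}) - C \;\leq\; \tilde{v}_t(\bm{x}) \;\leq\; (\tilde{T}_t v_{t+1}^\star)(\bm{x}) + C \quad \forall\,\bm{x}\in\mathcal{Z}_t.
\]

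Next I invoke Proposition~\ref{prop:op} applied to $v := v_{t+1}^\star$, which is locally Lipschitz on $\mathcal{Z}_{t+1}$ by hypothesis (and hence its restricted Lipschitz constant $L_{t+1}$ is finite). This yields
\[
|(\tilde{T}_t v_{t+1}^\star)(\bm{x}) - (T_t v_{t+1}^\star)(\bm{x})| \leq L_{t+1}\delta \quad \forall\,\bm{x}\in\mathcal{Z}_t.
\]
Since $(T_t v_{t+1}^\star)(\bm{x}) = v_t^\star(\bm{x})$ for all $\bm{x}\in\mathcal{Z}_t$ by the Bellman equation on the restricted domain, combining the two displays with the triangle inequality gives
\[
|\tilde{v}_t(\bm{x}) - v_t^\star(\bm{x})| \leq C + L_{t+1}\delta = \sum_{k=t+1}^K L_k\delta \quad \forall\,\bm{x}\in\mathcal{Z}_t,
\]
closing the induction. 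Uniform convergence on $\mathcal{Z}_t$ as $\delta\to 0$ follows because the right-hand side is independent of $\bm{x}$ and tends to zero as $\delta\to 0$, provided each $L_k$ stays bounded (which is ensured by the local Lipschitz assumption together with compactness of each cell $\mathcal{C}_j$).

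The main obstacle I anticipate is not really analytical but rather bookkeeping: I need to verify carefully that $\tilde{T}_t$ is monotone and commutes with constant shifts, since $\tilde{T}_t$ is a bi-level operator whose inner problem is linear only after fixing $(\bm{x},\bm{u})$. The monotonicity is clean: if $v\leq v'$ then for every admissible $(\bm{u},\gamma)$ the inner objective is non-greater for $v$ than for $v'$, so the infimum over $(\bm{u},\gamma)$ inherits the inequality. The constant-shift identity rests on the normalization $\sum_i \gamma_{s,i}=1$ plus $\sum_s p_s = 1$. Once these two facts are recorded, the induction proceeds exactly as in Theorem~\ref{thm:bd1}, with the one sided convexity argument replaced by the two-sided Lipschitz estimate of Proposition~\ref{prop:op}.
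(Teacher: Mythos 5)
Your proposal is correct and follows exactly the route the paper intends: the paper omits a detailed proof, stating only that the result follows "by using Proposition~\ref{prop:op} and the inductive argument in the proof of Theorem~\ref{thm:bd1}," and your backward induction with the monotonicity and constant-shift properties of $\tilde{T}_t$, the two-sided bound from Proposition~\ref{prop:op} applied to $v_{t+1}^\star$, and the Bellman identity $T_t v_{t+1}^\star = v_t^\star$ is precisely that argument, correctly adapted to the two-sided (absolute-value) estimate.
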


As before, we construct a deterministic Markov policy $\tilde{\pi} := (\tilde{\pi}_0, \ldots, \tilde{\pi}_{K-1})$ by setting $\tilde{\pi}_t(\bm{x})$ to be an optimal solution of \eqref{nopt} with $v:= \tilde{v}_{t+1}$, i.e.,
\begin{equation}\nonumber
\begin{split}
\tilde{\pi}_t (\bm{x}) \in \argmin_{\bm{u} \in \mathcal{U}(\bm{x})} &\big [ r(\bm{x}, \bm{u}) + (H_t  \tilde{v}_{t+1})(\bm{x}, \bm{u}) \big ] \quad \forall \bm{x} \in \mathcal{Z}_{t}.
\end{split}
\end{equation}
Then, the cost-to-go function $v_t^{\tilde{\pi}}: \mathcal{Z}_t \to \mathbb{R}$ under this policy  can be evaluated by solving the following Bellman equation:
\[
v_t^{\tilde{\pi}} := T^{\tilde{\pi}_t} v_{t+1}^{\tilde{\pi}} \quad \forall t = 0, \ldots, K-1,
\]
where $v_K^{\tilde{\pi}} \equiv q$ on $\mathcal{Z}_K$.
This cost $v_t^{\tilde{\pi}}$ incurred by the approximate policy $\tilde{\pi}$
converges uniformly to the optimal value function $v_t^\star$ as the grid resolution becomes finer.

\begin{corollary}
Suppose that Assumption \ref{ass:sc} holds, and that $v_t^\star: \mathcal{Z}_t \to \mathbb{R}$ is  locally Lipschitz continuous  for all $t =0, \ldots, K$.
Then, we have
\[
0 \leq {v}_t^{\tilde{\pi}} (\bm{x}) - v_t^\star (\bm{x})  \leq \sum_{k = t+1}^K 2 L_k \delta \quad \forall \bm{x} \in \mathcal{Z}_t \; \forall t =0, \ldots, K,
\]
where $L_k :=  \sup_{j = 1,\ldots, N_{\mathcal{C},k}} \sup_{\bm{x}, \bm{x}' \in \mathcal{C}_j: \bm{x} \neq \bm{x}'} \frac{\| v_k^\star(\bm{x}) - v_k^\star(\bm{x}')  \|}{\| \bm{x} - \bm{x}'  \|}$, and therefore
${v}_t^{\tilde{\pi}}$ converges uniformly to $v_t^\star$ on $\mathcal{Z}_t$ as $\delta \to 0$.

\end{corollary}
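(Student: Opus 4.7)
The lower bound $v_t^{\tilde\pi}\geq v_t^\star$ is immediate from the optimality of $v_t^\star$ as the infimum cost-to-go under Assumption~1. For the upper bound I will use the triangle-style decomposition
\[
v_t^{\tilde\pi}(\bm{x})-v_t^\star(\bm{x}) \;=\; \bigl[\,v_t^{\tilde\pi}(\bm{x})-\tilde v_t(\bm{x})\,\bigr] \;+\; \bigl[\,\tilde v_t(\bm{x})-v_t^\star(\bm{x})\,\bigr].
\]
The second bracket is bounded by $\sum_{k=t+1}^{K}L_k\delta$ by the preceding theorem. The plan is therefore to prove, by backward induction in $t$, that the first bracket is bounded by the same quantity $\sum_{k=t+1}^{K}L_k\delta$; adding the two pieces yields the claimed factor of $2$.

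The base case $t=K$ is trivial since $v_K^{\tilde\pi}=\tilde v_K\equiv q$ on $\mathcal Z_K$. For the inductive step, using $v_t^{\tilde\pi}=T^{\tilde\pi_t}v_{t+1}^{\tilde\pi}$ together with the fact that $\tilde\pi_t(\bm{x})$ attains the outer infimum defining $\tilde T_t\tilde v_{t+1}(\bm{x})$, the difference rewrites as
\[
v_t^{\tilde\pi}(\bm{x})-\tilde v_t(\bm{x}) \;=\; \sum_{s=1}^{N}p_s v_{t+1}^{\tilde\pi}\!\bigl(f_s\bigr)\;-\;(H_t\tilde v_{t+1})(\bm{x},\tilde\pi_t(\bm{x})),
\]
where $f_s:=f(\bm{x},\tilde\pi_t(\bm{x}),\hat\xi^{[s]})\in\mathcal Z_{t+1}$. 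I would then bridge through the Lipschitz function $v_{t+1}^\star$ in three moves: (i) use the induction hypothesis at each $f_s\in\mathcal Z_{t+1}$ to replace $v_{t+1}^{\tilde\pi}$ by $v_{t+1}^\star$; (ii) invoke $|\sum_s p_s v_{t+1}^\star(f_s)-H_tv_{t+1}^\star(\bm{x},\tilde\pi_t(\bm{x}))|\leq L_{t+1}\delta$ from the proof of Proposition~3, which only requires Lipschitz continuity of the value argument; (iii) exchange $H_tv_{t+1}^\star$ for $H_t\tilde v_{t+1}$ using that for fixed $(\bm{x},\bm{u})$, $H_t$ is a convex-combination functional of its value argument and hence $1$-Lipschitz in the sup-norm, giving error at most $\|v_{t+1}^\star-\tilde v_{t+1}\|_\infty\leq\sum_{k=t+2}^{K}L_k\delta$.

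The main obstacle is precisely that, unlike the convex setting of Section~3 (where Jensen's inequality on the convex $\hat v_{t+1}$ yielded a clean one-sided estimate via Lemma~1), here $\tilde v_{t+1}$ is not known to be Lipschitz on $\mathcal Z_{t+1}$. Consequently Proposition~3 cannot be applied with $v=\tilde v_{t+1}$ directly; every comparison must be routed through the Lipschitz $v_{t+1}^\star$, and the bookkeeping of the resulting $\|v_{t+1}^\star-\tilde v_{t+1}\|_\infty$ terms is what couples the two summands of the decomposition and produces the constant $2$ in $\sum_{k=t+1}^{K}2L_k\delta$. Once the inductive bound is secured, uniform convergence ${v}_t^{\tilde\pi}\to v_t^\star$ on $\mathcal Z_t$ as $\delta\to 0$ follows immediately.
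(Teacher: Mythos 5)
Your lower bound and the estimate $\tilde v_t-v_t^\star\le\sum_{k=t+1}^K L_k\delta$ for the second bracket are fine, but the inductive step for the first bracket does not close, and this is a genuine gap rather than deferred bookkeeping. Trace the constants: with the induction hypothesis $v_{t+1}^{\tilde\pi}-\tilde v_{t+1}\le\sum_{k=t+2}^K L_k\delta$ on $\mathcal Z_{t+1}$, your three moves give
\begin{align*}
\sum_{s=1}^N p_s v_{t+1}^{\tilde\pi}(f_s)
&\le \sum_{s=1}^N p_s \tilde v_{t+1}(f_s)+\sum_{k=t+2}^K L_k\delta
\;\le\; \sum_{s=1}^N p_s v_{t+1}^{\star}(f_s)+2\sum_{k=t+2}^K L_k\delta\\
&\le (H_t v_{t+1}^\star)(\bm{x},\tilde{\pi}_t(\bm{x}))+L_{t+1}\delta+2\sum_{k=t+2}^K L_k\delta\\
&\le (H_t \tilde v_{t+1})(\bm{x},\tilde{\pi}_t(\bm{x}))+L_{t+1}\delta+3\sum_{k=t+2}^K L_k\delta,
\end{align*}
because the passage $\tilde v_{t+1}\to v_{t+1}^\star$ hidden in your move (i) and the passage $H_tv_{t+1}^\star\to H_t\tilde v_{t+1}$ in move (iii) \emph{each} cost an extra $\|\tilde v_{t+1}-v_{t+1}^\star\|_\infty\le\sum_{k=t+2}^K L_k\delta$ on top of what the induction hypothesis supplies. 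The resulting recursion for the first bracket is $c_t=c_{t+1}+L_{t+1}\delta+2\sum_{k=t+2}^K L_k\delta$, whose solution grows quadratically in the horizon length; you cannot recover the invariant $c_t=\sum_{k=t+1}^K L_k\delta$, and hence not the factor $2$ either. The pivot through $\tilde v_t$ is structurally the wrong one: since $\tilde v_{t+1}$ carries no Lipschitz bound, every stage forces you to pay $\|\tilde v_{t+1}-v_{t+1}^\star\|_\infty$ twice, and these payments accumulate.

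The paper instead inducts directly on $v_t^{\tilde\pi}-v_t^\star\le\sum_{k=t+1}^K 2L_k\delta$ and never lets $\tilde v_{t+1}$ enter the per-stage estimate. After the monotonicity step $T^{\tilde{\pi}_t}v_{t+1}^{\tilde\pi}\le T^{\tilde{\pi}_t}v_{t+1}^{\star}+\sum_{k=t+2}^K 2L_k\delta$, it sandwiches $T^{\tilde{\pi}_t}v_{t+1}^\star\le \tilde T_t v_{t+1}^\star+L_{t+1}\delta\le T_t v_{t+1}^\star+2L_{t+1}\delta=v_t^\star+2L_{t+1}\delta$, where both comparisons are performed on the single Lipschitz function $v_{t+1}^\star$ (the first via the local cell-vertex representation of $f(\bm{x},\tilde{\pi}_t(\bm{x}),\hat{\xi}^{[s]})$, the second via Proposition~\ref{prop:op}). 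The factor $2$ is thus a per-stage cost of $2L_{t+1}\delta$, not the sum of two global brackets each worth $\sum_k L_k\delta$. To repair your argument, abandon the decomposition through $\tilde v_t$ and compare $T^{\tilde{\pi}_t}v_{t+1}^\star$ with $\tilde T_t v_{t+1}^\star$ directly inside the induction.
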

\begin{proof}
By the optimality of $v_t^\star$ under Assumption~\ref{ass:sc}, we have $v_t^\star \leq v_t^{\tilde{\pi}}$ on $\mathcal{Z}_t$ for all $t$.
We now show that $v_t^{\tilde{\pi}} \leq v_t^\star  + \sum_{k=t+1}^K 2 L_k \delta$  on $\mathcal{Z}_t$ by using mathematical induction.
For $t = K$, $v_K^{\tilde{\pi}} = v_K^\star \equiv q$ on $\mathcal{Z}_K$, and thus the induction hypothesis holds. 
Suppose that the induction hypothesis is valid for some $t+1$, i.e.,
\[
0 \leq {v}_{t+1}^{\tilde{\pi}} (\bm{x}) - v_{t+1}^\star (\bm{x})  \leq \sum_{k = t+2}^K 2 L_k \delta \quad \forall \bm{x} \in \mathcal{Z}_{t+1}.
\]
Then, by the monotonicity of ${T}_{t}^{\tilde{\pi}}$, we have
\begin{equation}\label{comp1}
({T}^{\tilde{\pi}_t} v_{t+1}^{\tilde{\pi}})(\bm{x}) \leq ({T}^{\tilde{\pi}_t} v_{t+1}^\star)(\bm{x}) + \sum_{k=t+2}^K 2 L_k \delta \quad \forall \bm{x} \in \mathcal{Z}_{t}.
\end{equation}
Fix an arbitrary state $\bm{x} \in \mathcal{Z}_{t}$.
Under Assumption~\ref{ass:sc}, by the definition of $\tilde{T}_{t}$, there exists $\tilde{\gamma} \in \Delta^N$ such that 
\begin{equation}\label{t1}
(\tilde{T}_{t} v_{t+1}^\star )(\bm{x}) =
r (\bm{x}, \tilde{\pi}_t (\bm{x}) ) 
+  \sum_{s=1}^N \sum_{i \in \mathcal{N}(\tilde{y}_s)} p_s  \tilde{\gamma}_{s,i} v_{t+1}^\star(\bm{x}^{[i]}), 
\end{equation}
where $
\tilde{y}_s := f(\bm{x}, \tilde{\pi}_t(\bm{x}), \hat{\xi}^{[s]})$, 
and
\[
f(\bm{x}, \tilde{\pi}_t (\bm{x}), \hat{\xi}^{[s]}) = \sum_{i \in \mathcal{N}(\tilde{y}_s)} \tilde{\gamma}_{s,i} \bm{x}^{[i]}.
\]
Since $v_{t+1}^\star$ is continuous on $\mathcal{Z}_{t+1}$, we have
\[
| v_{t+1}^\star (\tilde{y}_s) - v_{t+1}^\star (\bm{x}^{[i]})  |
\leq {L}_{t+1} \| \tilde{y}_s - \bm{x}^{[i]} \| \leq {L}_{t+1} \delta \quad \forall i \in \mathcal{N}(\tilde{y}_s).
\]
Note that $\sum_{i \in \mathcal{N}(\tilde{y}_s)} \tilde{\gamma}_{s,i} = 1$. Thus, the following inequalities hold:
\begin{equation} \label{t2}
\begin{split}
\bigg |
v_{t+1}^\star (\tilde{y}_s) - \sum_{i \in \mathcal{N}(\tilde{y}_s)}  \tilde{\gamma}_{s,i} v_{t+1}^\star(\bm{x}^{[i]}) 
\bigg | 
&\leq 
\sum_{i \in \mathcal{N}(\tilde{y}_s)}  \tilde{\gamma}_{s,i}  | v_{t+1}^\star (\tilde{y}_s) - v_{t+1}^\star(\bm{x}^{[i]}) |\\
&\leq 
{L}_{t+1} \delta.
\end{split}
\end{equation}
By \eqref{t1} and \eqref{t2}, we have
\begin{equation} \label{comp2} 
\begin{split}
(\tilde{T}_{t} v_{t+1}^\star )(\bm{x}) 
&\geq 
r (\bm{x}, \tilde{\pi}_t (\bm{x}) ) 
+  \sum_{s=1}^N p_s  v_{t+1}^\star (\tilde{y}_s) - {L}_{t+1} \delta\\
&=r (\bm{x}, \tilde{\pi}_t (\bm{x}) ) 
+ \sum_{s=1}^N p_s  v_{t+1}^\star (f(\bm{x}, \tilde{\pi}_t(\bm{x}), \hat{\xi}^{[s]})) - {L}_{t+1} \delta\\
&\geq (T^{\tilde{\pi}_t} v_{t+1}^\star )(\bm{x}) - {L}_{t+1} \delta.
\end{split}
\end{equation}
On the other hand, by Proposition~\ref{prop:op}, we have
\begin{equation}\label{comp3}
(\tilde{T}_{t} v_{t+1}^\star)(\bm{x}) \leq (T_{t} v_{t+1}^\star)(\bm{x}) + L_{t+1} \delta.
\end{equation}
Since $\bm{x}$ was arbitrarily chosen in $\mathcal{Z}_{t}$, by combining inequalities \eqref{comp1}, \eqref{comp2} and \eqref{comp3}, we conclude that
\[
v_{t}^{\tilde{\pi}} = {T}^{\tilde{\pi}_t} v_{t+1}^{\tilde{\pi}} \leq {T}_t v_{t+1}^\star + \sum_{k={t+1}}^K 2 L_k \delta = v_{t}^\star +\sum_{k={t+1}}^K 2 L_k \delta
\]
on $\mathcal{Z}_{t}$,
which implies that the induction hypothesis is valid for $t$ as desired.  \qed
\end{proof}

From the computational perspective, it is challenging to obtain a globally optimal solution of \eqref{nopt} due to nonconvexity.
Thus, over-approximating $\tilde{T}_{t} v$ is inevitable in practice, and the quality of an approximate policy $\tilde{\pi}$ from an over-approximation of $\tilde{v}_t$'s depends on the quality of locally optimal solutions to \eqref{nopt} evaluated in the process of value iteration. 
As in Section~\ref{sec:ca}, one may be able to characterize a suboptimality bound for the approximate policy, for example, by using a convex relaxation of \eqref{nopt}.
In fact, the optimization problem \eqref{con_opt} can be interpreted as a convex relaxation of \eqref{nopt} in the case of control-affine systems.
Another classical and possibly practical method for solving the minimization problem in \eqref{nopt} is to discretize the action space. 
After discretization, we evaluate the optimal value of the program \eqref{copt} for each action, and then compare the optimal values to approximately solve \eqref{nopt}. This approach is used in solving the nonconvex problem in Section~\ref{num:ca}.

On the other hand, the inner optimization problem~\eqref{copt} can be efficiently solved using existing linear programming algorithms such as interior point and simplex methods~(e.g., \cite{Dantzig1998, Bertsimas1997}). 
The linear optimization problem has an equivalent form with
the reduced optimization variable $\tilde{\gamma} := (\gamma_i)_{i \in \mathcal{N}_y}$ instead of using the full vector $\gamma$.
Using this reduced optimization problem significantly gains computational efficiency since the size of the reduced optimization variable is independent of grid resolution. 
For example, in the case of regular Cartesian grid with size $N_x^n$, the number of optimization variables decreases
from $N_x^n \times N$ to $2^n \times N$.

\section{Numerical Experiments}\label{sec:num}

\subsection{\bf Linear-Convex Control}\label{num:lc}

We first demonstrate the performance of the proposed method through a  linear-convex stochastic control problem. 
Consider the linear stochastic system
\[
x_{t+1} = 
{A} x_t + B u_t + C \xi_t,
\]
where $x_t \in \mathbb{R}^2$ and $\xi_t \in \mathbb{R}$.
We set $A = \begin{bmatrix} 0.85 & 0.1 \\0.1 & 0.85 \end{bmatrix}$,   and $C = \begin{bmatrix} 1 \\ 1 \end{bmatrix}$.
To demonstrate that our method can handle high-dimensional action spaces, 
the control variable $u_t$ is chosen to be 1000 dimensional.  The matrix $B$ is an $2$ by $1000$ matrix of all entries being sampled independently from the uniform distribution over $[0,1]$ and then normalized so that the 1-norm of each row is~1.\footnote{The matrix $B$ used in our experiments can be downloaded from the following link: \url{http://coregroup.snu.ac.kr/DB/B1000.mat}.} 
The stage-wise cost function is given by
\[
r(x_t, u_t) = \|x_t\|_1 + \|u_t\|_2^2,
\]
which is convex but non-quadratic. 
The set of admissible control actions is chosen as
\[
u_t \in \mathcal{U} := [-0.15, 0.15]^{1000}.
\]
The support elements of $\xi_t$ are sampled according to the uniform distribution over $[-0.1,0.1]$.
The computational domains are chosen as follows: $\mathcal{Z}_t := [-1 - 0.2t, 1 + 0.2t]^2$ for $t = 0, \ldots, 5$ so that $\mathcal{Z}_0 =[-1, 1]^2$ and 
$\mathcal{Z}_{5} = [-2,2]^2$. This choice of computational domains satisfies that
\[
\big \{
A \bm{x} + B \bm{u} + C\bm{\xi} : \bm{x} \in \mathcal{Z}_t, \bm{u} \in [-0.15, 0.15]^2, \bm{\xi} \in [-0.1, 0.1]  \big \} \subseteq \mathcal{Z}_{t+1},
\]
and $\mathcal{Z}_0 \subseteq \mathcal{Z}_1 \subseteq \cdots \subseteq \mathcal{Z}_{5}$.
Each $\mathcal{Z}_t$ is discretized as a two-dimensional regular Cartesian grid using the approach in Appendix~\ref{app:grid}
with grid points $\{ (-1 -0.2t + \delta_x (i-1), -1 -0.2t + \delta_x (j-1))  : i,j = 1, \ldots,  (2 + 0.4 t)/\delta_x + 1\}$ with equal grid spacing $\delta_x$.
The terminal cost is set to be $q \equiv 0$.
The numerical experiments  were conducted on a PC with 4.2 GHz Intel Core i7 and 64GB RAM. The optimization problems were solved using the interior point method in CPLEX.

\begin{figure}[tb]
\begin{center}
\includegraphics[width=3.3in]{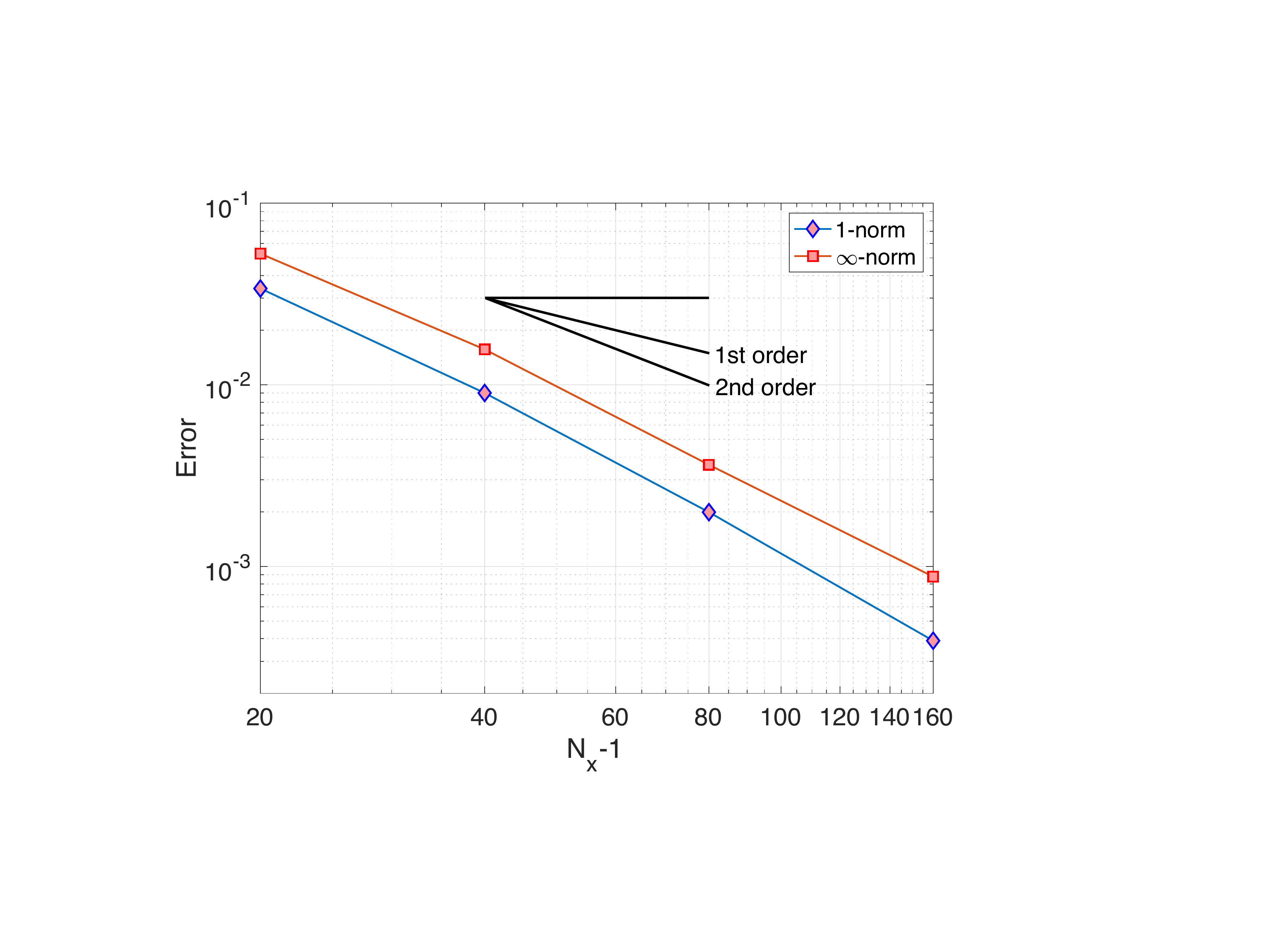}   
\caption{Empirical convergence rates of $\hat{v}_{(\delta_x),0}$ for the linear convex problem with grid size $21^2, 41^2, 81^2$ and $161^2$. Here, the label $N_x$ denotes the number of grid points on one axis. The errors $\| \hat{v}_{(N_x),0} - \bar{v}_0 \|$  are measured by the $1$($\Diamond$)-- and $\infty$($\Box$)--norm over $\mathcal{Z}_0 = [-1, 1]^2$.}  
\label{fig:grid_convergence}    
\end{center}           
\end{figure}

\begin{table}[tb]
\small
\centering
\caption{Computation time (in seconds) and errors for the linear $L^1$-control problem with different grid sizes (evaluated over $\mathcal{Z}_0 = [-1, 1]^2$).\label{tab:1}}
\begin{tabular}{ l | >{\centering}p{0.5in}>{\centering}p{0.5in}  >{\centering}p{0.5in}  >{\centering\arraybackslash}p{0.5in} }
\# of grid points       & $21^2$ & $41^2$ & $81^2$ & $161^2$  \\
\hline \hline
Time (sec)  & 18.22  &  139.93  & 1877.25   &  43736.92  
   \\
 \hline
$\ell_1$-error   & 0.0339     &   0.0090   &  0.0020    &   0.0004   \\
 \hline
 $\ell_\infty$-error   &  0.0527   &   0.0157   &  0.0036   &   0.0009    \\
 \hline
\end{tabular}
\end{table}

\begin{figure}[tb]
\begin{center}
\includegraphics[width=3.5in]{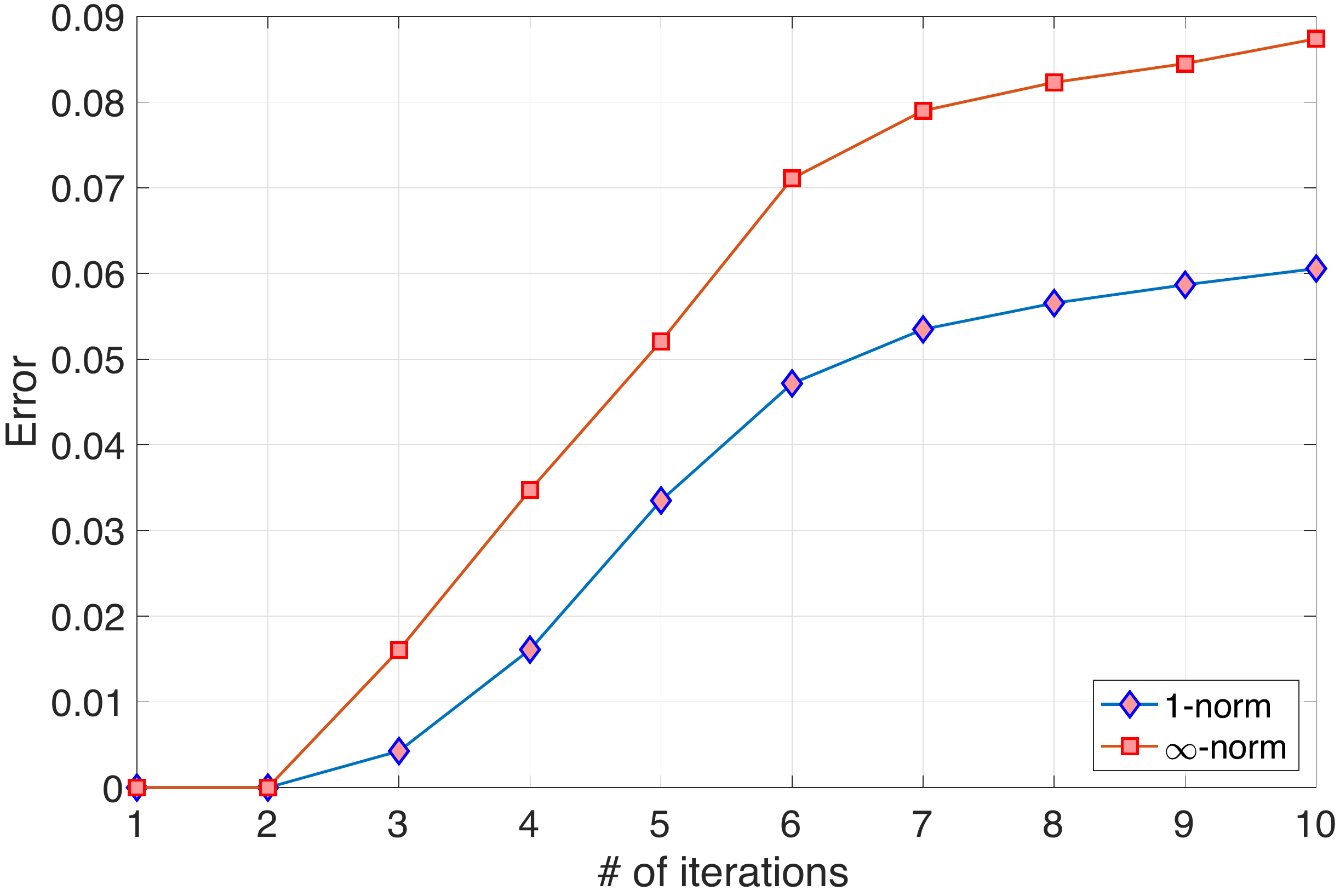}   
\caption{Errors for the linear convex problem with respect to the number of DP iterations. The errors are measured by the $1$($\Diamond$)-- and $\infty$($\Box$)--norm over $\mathcal{Z}_0 = [-1, 1]^2$. }  
\label{fig:error_time}    
\end{center}           
\end{figure}

Let $N_x$ denote the number of grid points on one axis of $\mathcal{Z}_5$. Thus, the total number of grid points is $N_x^2$.
To demonstrate the convergence of the proposed method as $\delta = \sqrt{2}\delta_x \to 0$ or equivalently as $N_x \to \infty$, we fix the number of sample data as $N = 10$, and compute $\hat{v}_{(N_x),0}$ for $N_x = 21, 41, 81, 161$ or equivalently for $\delta_x = 0.2, 0.1, 0.05, 0.025$, where 
$\hat{v}_{(N_x), t}$ denotes the approximate value function at $t$ with the number of grid points being $N_x^2$.
Setting $\bar{v}_0 := \hat{v}_{(N_x = 321), 0}$, the errors $\| \hat{v}_{(N_x),0} - \bar{v}_0\|$  over $\mathcal{Z}_0 = [-1, 1]^2$, measured by the $1$-- and $\infty$--norm,  are shown in Fig.~\ref{fig:grid_convergence}. 
The CPU time for computation is also reported in Table~\ref{tab:1}.\footnote{
The CPU time increases superlinearly with the number of grid points. This is because the size of the optimization problem~\eqref{con_opt} also increases with the grid size. 
Note that the problem size is invariant
 when using the bi-level method in Section~\ref{sec:gen}.
 Thus, in that case the CPU time scales linearly as shown in Table~\ref{tab:d}.
}
In this example, the empirical convergence rate of our method is beyond the second order.\footnote{The observation of the second-order empirical convergence rate  is consistent with our theoretical result since Theorem 3.1 only suggests that the suboptimality gap decreases with the first-order rate. Thus, the actual convergence rate can be higher than the convergence rate for the suboptimality gap.} 
Furthermore, the relative error $\| (\hat{v}_{(N_x), 0} - \bar{v}_0)/\bar{v}_0 \|_1$ over $\mathcal{Z}_0 = [-1, 1]^2$ is $0.02\%$ in the case of $N_x = 161$.

Fig.~\ref{fig:error_time} shows the errors  in $1$-- and $\infty$--norm, evaluated over $\mathcal{Z}_0 = [-1. 1]^2$ 
depending on  the number of DP iterations or equivalently the length of time horizon when $\delta_x = 0.2$.
The errors grow sublinearly with respect to the number of DP iterations. 
This result is consistent with the error bound in Theorem~\ref{thm:bd1}, assuming that the constant $L_k$ does not change much over time.

To test the empirical convergence of the proposed method as the sample size increases, we also compute $\hat{v}_{(N),0}$ for $N=20, 40, 80, 160$ with $\delta_x$ fixed as $0.2$, where $\hat{v}_{(N),0}$ denotes the approximate value function at $t=0$ with sample size $N$.
The errors $\| \hat{v}_{(N),0} - \tilde{v}_0\|$  over $\mathcal{Z}_0 = [-1, 1]^2$ with $N=20, 40, 80, 160$ are shown in Fig.~\ref{fig:SAA_convergence}, where $\tilde{v}_0:= \hat{v}_{(N = 320), 0}$.  The CPU time   is reported in Table~\ref{tab:saa}.
In this example, the empirical convergence rate   is below the first order. 
The relative error $\| (\hat{v}_{(N), 0} - \tilde{v}_0)/\tilde{v}_0 \|_1$ over $\mathcal{Z}_0 = [-1, 1]^2$ is $0.72\%$ in the case of $N = 160$.

\begin{figure}[tb]
\begin{center}
\includegraphics[width=3.5in]{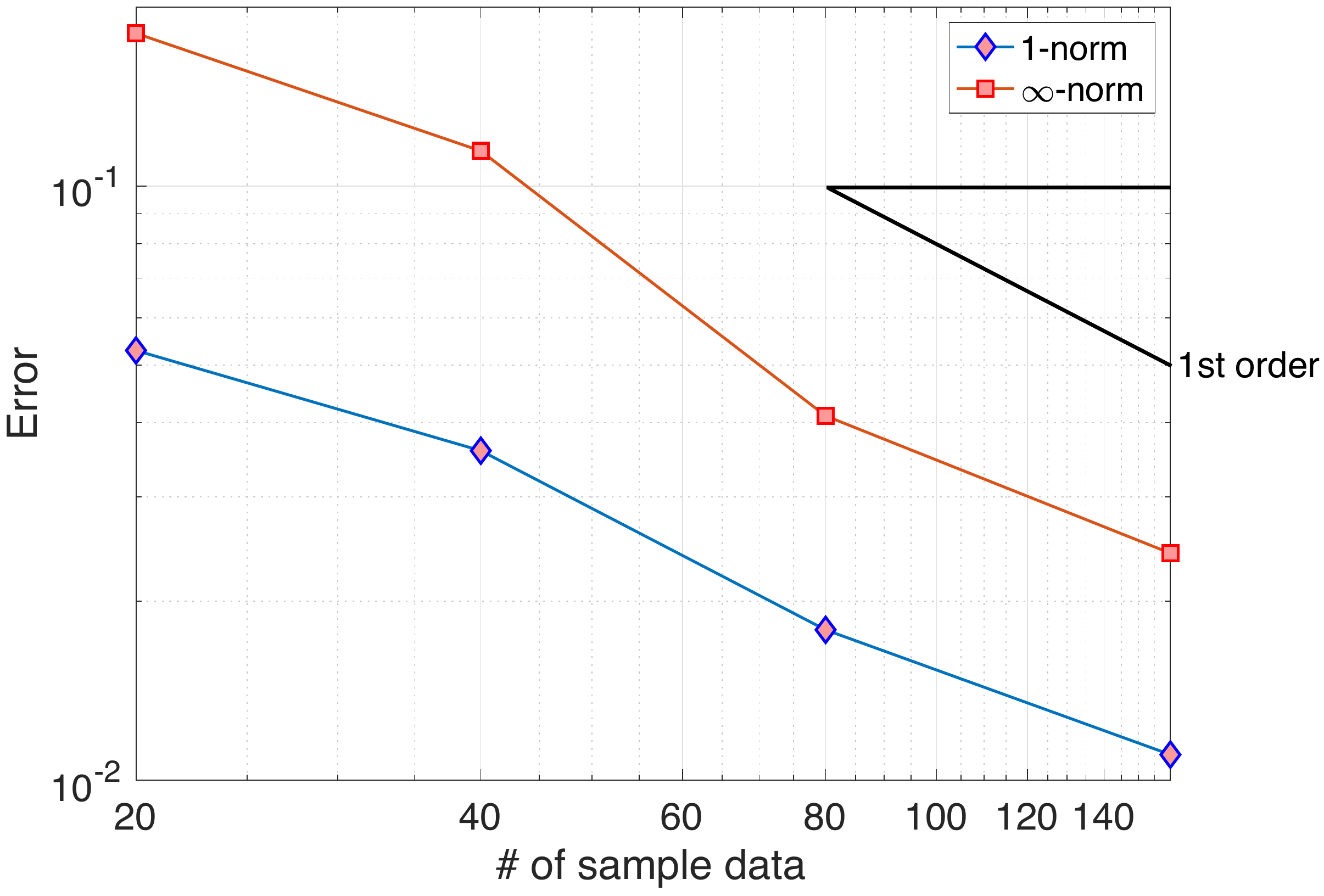}   
\caption{Empirical convergence rates of $\hat{v}_{(N),0}$ for the linear convex problem with sample size $N = 20, 40, 80, 160$. The errors $\| \hat{v}_{(N),0} - \tilde{v}_0 \|$  are measured by the $1$($\Diamond$)-- and $\infty$($\Box$)--norm  over $\mathcal{Z}_0 = [-1, 1]^2$.}  
\label{fig:SAA_convergence}    
\end{center}           
\end{figure}

\begin{table}[tb]
\small
\centering
\caption{Computation time (in seconds) and errors for the linear $L^1$-control problem with different sample sizes   (evaluated over $\mathcal{Z}_0 = [-1, 1]^2$).\label{tab:saa}}
\begin{tabular}{l | >{\centering}p{0.5in}>{\centering}p{0.5in}  >{\centering}p{0.5in}  >{\centering\arraybackslash}p{0.5in} }
Sample size      & $20$ & $40$ & $80$ & $160$      \\
\hline \hline
Time (sec)  & 35.81    & 72.53    &227.79    &  857.14  
  \\
 \hline
$\ell_1$-error  &  0.0529  &   0.0359  &  0.0179  &  0.0110   \\
 \hline
$\ell_\infty$-error  & 0.1808    &0.1146  & 0.0410   & 0.0241      \\
 \hline
\end{tabular}
\end{table}

\subsection{\bf Control-Affine Nonlinear System}\label{num:ca}

We consider the following nonlinear, control-affine system:
\[
x_{t+1} = x_t + [w_t  (1- x_t ) x_t  -  x_t u_t  ] \Delta t,
\]
which models the outbreak of an infectious disease~\cite{Sethi2000}.
Here, the system state $x_t \in [0,1]$ represents the ratio of infected people in a given population, the control input $u_t \in [0, u_{\max}]$ represents a public treatment action, and $w_t > 0$ denotes infectivity.
The support of $w_t$ was sampled from the uniform distribution over $[1,2]$.
The stage-wise cost function is chosen as the following convex, non-quadratic function:
\[
r(x_t, u_t) = x_t + \lambda u_t^2, 
\]
where $\lambda > 0$ is the intervention cost.
We choose $u_{\max} = 1$, $\Delta t = 10^{-1}$, $K = 20$, $\lambda = 10^{-1}$,  $N = 10$, and $q \equiv 0$.
In this case, the system state remains in the domain $[0,1]$ for all time. 
Thus, we set $\mathcal{Z}_t \equiv [0,1]$ for all $t = 0, \ldots, 20$. 
The domain $\mathcal{Z}_t$ is discretized with a uniform grid with grid spacing $\delta_x = 0.01$. Thus, the number of grid points is $101$. In this setting, the CPU time for running our method is 12.57 seconds.

\begin{figure}[tb]
\begin{center}
\includegraphics[width=5.5in]{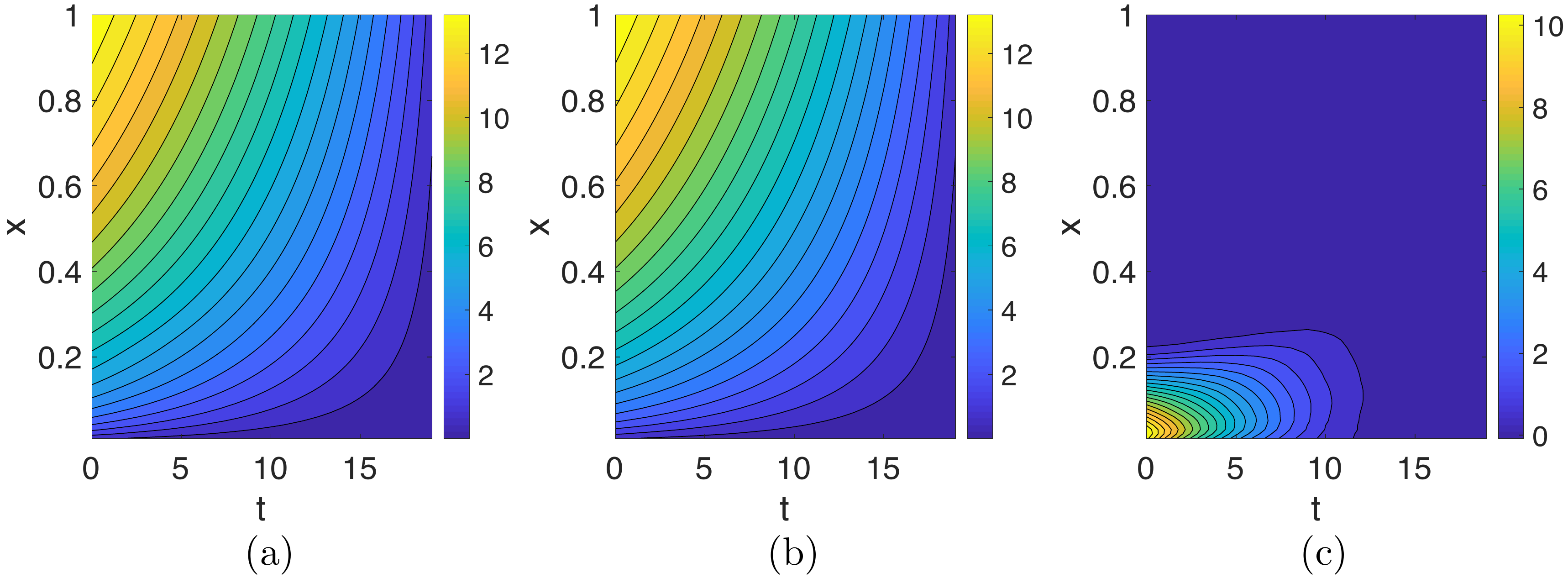}   
\caption{(a) The value function $v_t^{\hat{\pi}} (\bm{x})$ of the approximate policy $\hat{\pi}$, (b) the optimal value function $v_t^\star (\bm{x})$, and (c)
the relative error  $\frac{v_t^{\hat{\pi}} (\bm{x}) - v_t^\star (\bm{x})}{v_t^\star (\bm{x})} \times 100$\% for the epidemic control problem.}  
\label{fig:epidemic_error}    
\end{center}           
\end{figure}

To examine the suboptimality of our method in this problem with a control-affine system, we compare the value function $v_t^{\hat{\pi}}$ of our approximate policy and the optimal value function $v_t^\star$.\footnote{To compute the optimal value function, we used the method in Section \ref{sec:gen} discretizing the action space with $1001$ equally spacing grid points.} 
As shown in Fig.~\ref{fig:epidemic_error} (a) and (b), $v_t^{\hat{\pi}}$ and $v_t^\star$ look very similar to each other. Fig.~\ref{fig:epidemic_error} (c) shows the relative error 
\[
\frac{v_t^{\hat{\pi}} (\bm{x}) - v_t^\star (\bm{x})}{v_t^\star (\bm{x})} \times 100 \%.
\]
The relative error is less than 11\% for all $\bm{x} \in [0,1]$ and $t =0, \ldots, 20$. 
The $\ell_1$-norm and the $\ell_2$-norm of the relative error are 0.45\% and $1.44\%$, respectively. 
This result confirms the utility of our method even in nonconvex problems with control-affine systems.

\subsection{\bf Fully Nonlinear System}\label{num:nl}

We consider the following discrete-time Dubins car model~\cite{Dubins1957}, which is fully nonlinear:
\begin{equation} \nonumber
\begin{split}
\mathrm{x}_{t+1} &= \mathrm{x}_t + v_t \cos \theta_t, \quad \mathrm{y}_{t+1} = \mathrm{y}_t + v_t \sin \theta_t, \quad \theta_{t+1} = \theta_t + \frac{1}{L} v_t \tan s_t,
\end{split}
\end{equation}
where $x_t := (\mathrm{x}_t, \mathrm{y}_t, \theta_t) \in \mathbb{R}^3$ is the system state, and $u_t := (v_t, s_t) \in \mathbb{R}^2$ is the control action.
Specifically, $(x_t, y_t)$ represents the position of the vehicle in $\mathbb{R}^2$, and $\theta_t$ denotes the heading angle. 
Moreover, $v_t$ and $s_t$ denote a velocity input and a steering input, respectively. 
The third state equation specifies the turning rate.
The Dubins vehicle model is commonly used in robotics and controls as a model for describing the planar motion of wheeled vehicles. 
We set $v_t \in \{0, -0.1\}$, $s_t \in \{-1, 0, 1\}$, and $L = 0.5$.
The control objective is to steer the vehicle to the y-axis with heading angle $\pi$. Accordingly, the terminal cost function is set to be 
\[
q(\bm{x}) =\| (\bm{x}_2, \bm{x}_2) - (0, \pi)\|^2,
\]
 where $\bm{x} = (\bm{x}_1, \bm{x}_2, \bm{x}_3) \in \mathbb{R}^3$ with no running costs. 
With $K = 20$, the computational domains are chosen as follows:
\[
\mathcal{Z}_t := [-0.5 - 0.1t, 0.5+ 0.1t] \times [-0.5 - 0.1t, 0.5+ 0.1t] \times [0, 2\pi]
\]
 for $t =  0, 1, \ldots, 20$. It is straightforward to check that this choice satisfies the condition~\eqref{inclusion}.
Each $\mathcal{Z}_t$ is discretized as a three-dimensional Cartesian grid
using the approach in Appendix~\ref{app:grid} with grid points
\begin{equation}\nonumber
\begin{split}
\big \{ &(-0.5 -0.1t + \delta_x (i-1), -0.5 -0.1 t + \delta_x (j-1), 0 + \delta_\theta (k-1)): \\
 & i, j = 1, \ldots, (1 + 0.2 t)/\delta_x + 1, k = 1, \ldots, 2\pi/\delta_\theta + 1 \big \}.
\end{split}
\end{equation}
In our numerical experiment, we set $\delta_x = 0.1$ and $\delta_\theta = \pi/20$, and thus $\mathcal{Z}_{20}$ is discretized with a grid of size $51 \times 51 \times 41$. 
The CPU time for running our method  in Section~\ref{sec:gen} is 4630.93 seconds. 
Fig.~\ref{fig:dubin} shows the resulting vehicle trajectories starting from $(\mathrm{x}_0, \mathrm{y}_0) = (0,-0.5)$ with four different initial heading angles $\theta_0 = 0, \pi/2, \pi, 3\pi/2$. 
In all four cases, the vehicle moves to the y-axis with heading $\pi$ as desired.

\begin{figure}[tb]
\begin{center}
\includegraphics[width=3.5in]{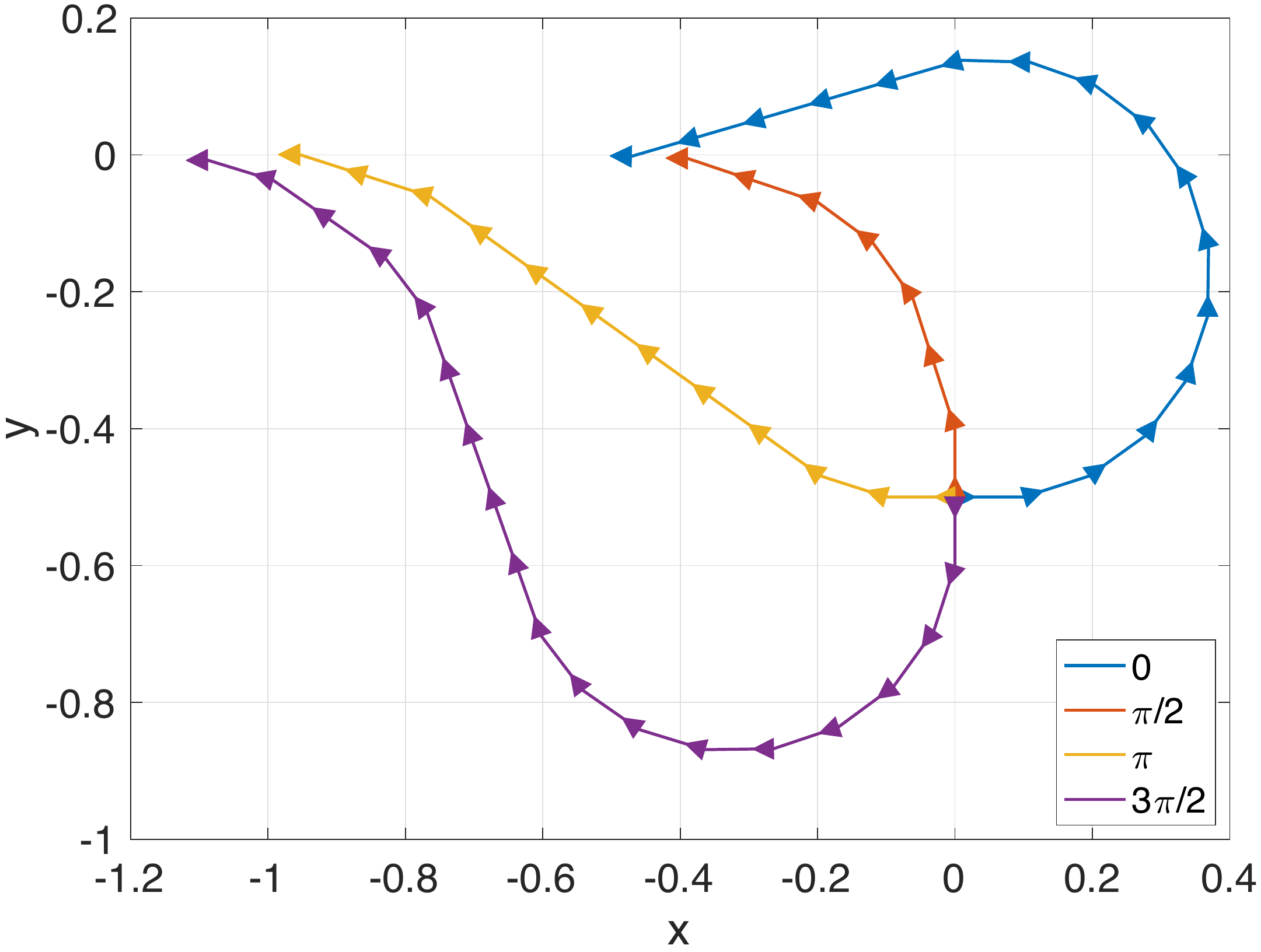}   
\caption{Controlled trajectories for the Dubins vehicle model with different initial heading angles.  Each arrow represents the vehicles's heading at a particular instance.}
\label{fig:dubin}    
\end{center}           
\end{figure}

As in Section~\ref{num:lc}, let $N_x$ denote the number of grid points on one axis of $\mathcal{Z}_5$. Thus, the total number of grid points is $N_x^3$.
To demonstrate the convergence of our method as $\delta_x, \delta_\theta \to 0$ or equivalently as $N_x \to \infty$, 
we evaluate the errors $\| \hat{v}_{(N_x),0} - \bar{v}_0 \|$ in the $1$-- and $\infty$--norm over $\mathcal{Z}_0$, where $\hat{v}_{(N_x), t}$ denotes the approximate value function at $t$ with the number of grid points being $N_x^3$, and  $\bar{v}_0 := \hat{v}_{(N_x = 161), 0}$.
Table~\ref{tab:d} shows the errors and the CPU time with different grid sizes and $K = 5$. 
This result indicates that the empirical convergence rate in this example is approximately the second order. 
Another important observation is that the computation time scales linearly with the number of grid points. This is because the size of the linear optimization problem \eqref{copt} is invariant with the grid size as discussed in Section~\ref{sec:gen}.

\begin{table}[tb]
\small
\centering
\caption{Computation time (in seconds) and errors for the Dubins vehicle  problem with different grid sizes (evaluated over $\mathcal{Z}_0$).\label{tab:d}}
\begin{tabular}{ l | >{\centering}p{0.7in}   >{\centering}p{0.7in}  >{\centering\arraybackslash}p{0.7in} }
\# of grid points       & $21^3$ & $41^3$ & $81^3$   \\
\hline \hline
Time (sec)  & 135.31   &  975.54  & 7574.77 
   \\
 \hline
$\ell_1$-error   & 0.0082    &   0.0027  &  0.0008      \\
 \hline
 $\ell_\infty$-error   & 0.0122    &0.0038   &0.0009         \\
 \hline
\end{tabular}
\end{table}

\section{Conclusions}

We have proposed and analyzed a convex optimization-based method designed to solve dynamic programs in continuous state and action spaces.  
This interpolation-free approach provides a control policy of which performance converges uniformly to the optimum as the computational grid becomes finer when the optimal and approximate value functions are convex.
In the case of control-affine systems with convex cost functions,
the proposed bound on the gap between 
 the cost-to-go function of the approximate policy 
 and the optimal value function  is useful to gauge  the degree of suboptimality.
 In general nonlinear cases, a simple modification to a bi-level optimization formulation, of which inner problem is a linear program,  maintains the uniform convergence properties if an optimal solution to this bi-level program can be computed.


\appendix

\section{State Space Discretization Using a Rectilinear Grid}\label{app:grid}

In this appendix, we provide a concrete way to discretize the state space using a rectilinear grid. The construction below satisfies all the conditions in Section~\ref{sec:dis}.

\begin{enumerate}

\item Choose a convex compact set $\mathcal{Z}_0 := [\underline{\bm{x}}_{0, 1}, \overline{\bm{x}}_{0, 1}] \times [\underline{\bm{x}}_{0, 2}, \overline{\bm{x}}_{0, 2}] \times \cdots \times [\underline{\bm{x}}_{0, n}, \overline{\bm{x}}_{0, n}]$, and discretize it using an $n$-dimensional rectilinear grid. 
Set $t \leftarrow 0$.

\item 
Compute (or over-approximate) the forward reachable set\footnote{The forward reachable set can be over-approximated in an analytical way, particularly when a loose approximation is allowed. For a high quality of approximation, one may use advanced computational techniques with semidefinite approximation~\cite{Magron2019} and ellipsoidal approximation~\cite{Kurzhanskiy2011}, among others.}
\[
R_{t} := \big \{ f(\bm{x}, \bm{u}, \bm{\xi}) : \bm{x} \in \mathcal{Z}_{t}, \bm{u} \in \mathcal{U}(\bm{x}), \bm{\xi} \in \Xi \big \}. 
\]

\item 
Choose a convex compact set $\mathcal{Z}_{t+1} :=  [\underline{\bm{x}}_{t+1, 1}, \overline{\bm{x}}_{t+1, 1}] \times [\underline{\bm{x}}_{t+1, 2}, \overline{\bm{x}}_{t+1, 2}] \times \cdots \times [\underline{\bm{x}}_{t+1, n}, \overline{\bm{x}}_{t+1, n}]$ such that  $R_t \subseteq \mathcal{Z}_{t+1}$. 

\item 
Expand the rectilinear grid to fit  $\mathcal{Z}_{t+1}$.

\item Stop if $t+1 = K$; otherwise, set $t \leftarrow t+1$ and go to Step 2. 

\end{enumerate}

We can then choose $\mathcal{C}_i$ as each grid cell. 
We label $\mathcal{C}_i$ so that $\bigcup_{i=1}^{N_{\mathcal{C}, t}} \mathcal{C}_i = \mathcal{Z}_t$
for all $t$. A two-dimensional example is shown in Fig.~\ref{fig:grid}.
This construction approach was used in Section \ref{num:lc} and \ref{num:nl}.

\section{Proof of Lemma~\ref{lem:conv}}\label{app:conv}

\begin{proof}
Suppose that $v: \mathcal{Z}_{t+1} \to \mathbb{R}$ is convex.
 Fix two arbitrary states $\bm{x}_{(k)} \in \mathcal{Z}_{t}$, $k=1,2$.
 
 We first show that $T_t v$ is convex on $\mathcal{Z}_t$.
Under Assumption~\ref{ass:sc}, for $k=1,2$,
there exists $\bm{u}_{(k)} \in \mathcal{U}(\bm{x}_{(k)})$ such that
\[
(T_t v)(\bm{x}_{(k)}) = r(\bm{x}_{(k)}, \bm{u}_{(k)}) 
+   \sum_{s=1}^N p_s  v (f(\bm{x}_{(k)}, \bm{u}_{(k)}, \hat{\xi}^{[s]}))
\]
and $f(\bm{x}_{(k)}, \bm{u}_{(k)}, \hat{\xi}^{[s]}) \in \mathcal{Z}_{t+1}$ for all $s \in \mathcal{S}$.
Fix an arbitrary $\lambda \in (0,1)$, and let 
$\bm{x}_{(\lambda)} := \lambda \bm{x}_{(1)} + (1-\lambda) \bm{x}_{(2)} \in \mathcal{Z}_t$ and $\bm{u}_{(\lambda)} := \lambda \bm{u}_{(1)} + (1-\lambda) \bm{u}_{(2)}$.
By Assumption~\ref{ass:lc}, we have $\bm{u}_{(\lambda)} \in \mathcal{U}(\bm{x}_{(\lambda)})$. 
Thus, the following inequality holds:
\begin{equation} \nonumber
\begin{split}
(T_t v)(\bm{x}_{(\lambda)}) \leq r(\bm{x}_{(\lambda)}, \bm{u}_{(\lambda)}) + 
   \sum_{s=1}^N  p_s v(f(\bm{x}_{(\lambda)}, \bm{u}_{(\lambda)}, \hat{\xi}^{[s]})).
\end{split}
\end{equation}
Since $(\bm{x}, \bm{u}) \mapsto r(\bm{x}, \bm{u}) $ and $(\bm{x}, \bm{u}) \mapsto v ( f(\bm{x}, \bm{u}, \bm{\xi}))$ are convex for each $\bm{\xi} \in \Xi$, we have
\begin{equation}\nonumber
\begin{split}
(T_t v)(\bm{x}_{(\lambda)}) &\leq \lambda r(\bm{x}_{(1)}, \bm{u}_{(1)}) + (1-\lambda) r (\bm{x}_{(2)}, \bm{u}_{(2)})\\
& + \sum_{s=1}^N p_s [
\lambda v(f(\bm{x}_{(1)}, \bm{u}_{(1)}, \hat{\xi}^{[s]})) + (1-\lambda) v(f(\bm{x}_{(2)}, \bm{u}_{(2)},  \hat{\xi}^{[s]}))
].
\end{split}
\end{equation}
Therefore, we obtain that
\[
(T_t v)(\bm{x}_{(\lambda)}) \leq \lambda (T_t v)(\bm{x}_{(1)}) +  (1-\lambda) (T_t v)(\bm{x}_{(2)}).
\]
which implies that $T_t v$ is convex on $\mathcal{Z}_t$.

Next we show that $\hat{T}_t v$ is convex on $\mathcal{Z}_t$.
Under Assumption~\ref{ass:sc}, for $k = 1, 2$,
there exists an optimal solution $(\hat{\bm{u}}_{(k)}, \hat{\gamma}_{(k)}) \in \mathcal{U}(\bm{x}_{(k)}) \times \Delta^N$  to \eqref{con_opt} with $\bm{x} := \bm{x}_{(k)}$, i.e.,
\[
(\hat{T}_t v)(\bm{x}_{(k)}) = r(\bm{x}_{(k)}, \hat{\bm{u}}_{(k)}) + \sum_{s=1}^N \sum_{i=1}^{M_{t+1}} p_s \hat{\gamma}_{(k),s,i} v (\bm{x}^{[i]})
\]
and $f(\bm{x}_{(k)}, \hat{\bm{u}}_{(k)}, \hat{\xi}^{[s]}) = \sum_{i=1}^{M_{t+1}}  \hat{\gamma}_{(k),s,i} \bm{x}^{[i]} \in \mathcal{Z}_{t+1}$ for all $s \in \mathcal{S}$.
Given any fixed $\lambda \in ]0,1[$, let $(\hat{\bm{u}}_{(\lambda)}, \hat{\gamma}_{(\lambda)}) := \lambda (\hat{\bm{u}}_{(1)}, \hat{\gamma}_{(1)}) + (1-\lambda) (\hat{\bm{u}}_{(2)}, \hat{\gamma}_{(2)})$.
Since $(\bm{x}, \bm{u}) \mapsto f(\bm{x}, \bm{u}, \hat{\xi}^{[s]})$ is affine on $\mathcal{K}$, 
\[
f(\bm{x}_{(\lambda)}, \hat{\bm{u}}_{(\lambda)}, \hat{\xi}^{[s]})
 = \sum_{i=1}^{M_{t+1}} \hat{\gamma}_{(\lambda),s,i} \bm{x}^{[i]}.
\]
This implies that $(\hat{\bm{u}}_{(\lambda)}, \hat{\gamma}_{(\lambda)})$ is a feasible solution to the minimization problem in the definition of $(\hat{T}_t v)(\bm{x}_{(\lambda)})$. 
Therefore,
\begin{equation} \nonumber
\begin{split}
(\hat{T}_t v)(\bm{x}_{(\lambda)}) &\leq r(\bm{x}_{(\lambda)}, \hat{\bm{u}}_{(\lambda)}) +  \sum_{s=1}^N \sum_{i=1}^{M_{t+1}} p_s \hat{\gamma}_{(\lambda),s,i} v (\bm{x}^{[i]})\\
&\leq \lambda r(\bm{x}_{(1)}, \hat{\bm{u}}_{(1)}) + (1-\lambda) r(\bm{x}_{(2)}, \hat{\bm{u}}_{(2)})  +   \sum_{s=1}^N \sum_{i=1}^{M_{t+1}} p_s [\lambda\hat{\gamma}_{(1), s,i} +(1-\lambda) \hat{\gamma}_{(2),s,i} ] v (\bm{x}^{[i]})\\
& = \lambda (\hat{T}_t v) (\bm{x}_{(1)}) + (1-\lambda) (\hat{T}_t v) (\bm{x}_{(2)}),
\end{split}
\end{equation}
where the second inequality holds by the convexity of $r$.
This implies that $\hat{T}_t v$ is convex on $\mathcal{Z}_t$. \qed
\end{proof}

\bibliographystyle{IEEEtran}

\bibliography{reference}

\end{document}